\documentclass[11pt]{amsart}
\usepackage{amsmath} 
\usepackage{amsthm}
\usepackage{amssymb} 
\usepackage{amsfonts}
\usepackage{graphicx}
\usepackage{color}
\usepackage[normalem]{ulem}

\newtheorem{theorem}{Theorem}[section]
\newtheorem{lemma}[theorem]{Lemma}
\newtheorem{proposition}[theorem]{Proposition}

\newtheorem{question}[theorem]{Question}

\newtheorem{corollary}[theorem]{Corollary}
\newtheorem{claim}[theorem]{Claim}
\theoremstyle{definition}
\newtheorem{definition}[theorem]{Definition}

\newcommand{\Z}{\mathbb{Z}}
\newcommand{\R}{\mathbb{R}}
\newcommand{\RP}{\R P}
\let\int\relax
\newcommand{\int}{\mathring}

\newcommand{\mirror}[1]{\overline{#1}}
\newcommand{\boundary}{\partial}

\usepackage{accents}

\DeclareMathSymbol{\wtilde}{\mathord}{largesymbols}{"65}
\newcommand\lowerwtilde{%
  \text{\smash{\raisebox{-1.3ex}{%
    $\wtilde$}}}}
\DeclareRobustCommand\newtilde[1]{%
  \mathchoice
    {\accentset{\displaystyle\lowerwtilde}{#1}}
    {\accentset{\textstyle\lowerwtilde}{#1}}
    {\accentset{\scriptstyle\lowerwtilde}{#1}}
    {\accentset{\scriptscriptstyle\lowerwtilde}{#1}}
}
%      
%      
%      \DeclareMathSymbol{\what}{\mathord}{largesymbols}{"62}
%  \text{\smash{\raisebox{-1.3ex}{%
%    $\what$}}}}
%    %\let\newtilde\relax
%\DeclareRobustCommand\newhat[1]{%
%  \mathchoice
%    {\accentset{\displaystyle\lowerwhat}{#1}}
%    {\accentset{\textstyle\lowerwhat}{#1}}
%    {\accentset{\scriptstyle\lowerwhat}{#1}}
%    {\accentset{\scriptscriptstyle\lowerwhat}{#1}}
%}
%  

\title[Trisections and the Price twist]{Trisections of surface complements and the Price twist}
\author[Seungwon Kim and Maggie Miller]{Seungwon Kim and Maggie Miller}

\address{Seungwon Kim\\National Institute for Mathematical Sciences\\Daejeon, South Korea}
\email{math751@gmail.com}
\address{Maggie Miller\\Department of Mathematics\\Princeton University\\ Princeton, NJ 08544, USA}
\email{maggiem@math.princeton.edu}
\urladdr{http://www.math.princeton.edu/~maggiem}

%%%%%%%%%%%%%%%%%%%%%%%%%%
%%%%%%%%%%%%%%%%%%%%%%%%%% DOCUMENT START
%%%%%%%%%%%%%%%%%%%%%%%%%%

\begin{document}
\maketitle

\begin{abstract}
Given a real projective plane $S$ embedded in a $4$-manifold $X^4$ with Euler number $2$ or $-2$, the Price twist is a surgery operation on $\nu(S)$ yielding (up to) three different $4$-manifolds: $X^4,\tau_S(X^4),\Sigma_S(X^4)$. This is of particular interest when $X^4=S^4$, as then $\Sigma_S(X^4)$ is a homotopy 4-sphere which is not obviously diffeomorphic to $S^4$. In this paper, we show how to produce a trisection description of each Price twist on $S\subset X^4$ by producing a relative trisection of $X^4\setminus\nu(S)$. Moreover, we show how to produce a trisection description of general surface complements in $4$-manifolds.

\end{abstract}
\setcounter{tocdepth}{2}
\setcounter{equation}{0}
\section{Introduction}

In 2012, Gay and Kirby \cite{gaykirby} introduced  \emph{trisections} of closed $4$-manifolds, an analogue of Heegaard splittings of $3$-manifolds. During the past six years, topologists have extended this idea to various objects such as $4$-manifolds with boundary \cite{nickthesis}, knotted surfaces in $4$-manifolds \cite{newsurfacetrisections}, and finitely presented groups \cite{groups}. Furthermore, trisections have been used to study classical problems in topology, such as the generalized property R conjecture \cite{propr} and the Thom conjecture \cite{peter}.  Recently, Gay and Meier \cite{jeffgluck} have studied surgery on spheres in $S^4$ (including the Gluck twist and blowdown), 
by constructing trisections of sphere complements in $4$-manifolds. In this paper, we study the \emph{Price twist}, which is a generalization of the Gluck twist \cite{pricepaper} (surgering an $\RP^2$ rather than an $S^2$) by constructing trisections of general surface complements in $4$-manifolds.

This paper is organized as follows. In Section \ref{sec:trisection}, we give definitions of various notions of trisection. In Section \ref{sec:price}, we describe the Price twist. In Section \ref{sec:complement}, we show how to produce a relative trisection of a surface complement in a $4$-manifold. Lastly, in Section \ref{sec:gettrisection} we give a procedure that produces a trisection of a $4$-manifold arising from a Price twist.

\subsection*{Acknowledgements}
Thanks to Jeff Meier, David Gay, and Alex Zupan for helpful conversations about surface complements (especially at CIRM 2018 and BIRS-CMO 2017). Thanks to Selman Akbulut for making us aware of plugs in $4$-manifolds at the AMS 2018 Spring Eastern Sectional Meeting. The second author also thanks her graduate advisor, David Gabai. Finally, we thank an anonymous referee for thoroughly reading this paper and providing many helpful comments.

The first author is supported by the National Institute for Mathematical Sciences South Korea (NIMS). The second author is a fellow in the National Science Foundation Graduate Research Fellowship program, under Grant No. DGE-1656466.

\section{Trisection, relative trisection, and bridge trisection}\label{sec:trisection}
\subsection{Definitions}

First, we define a trisection of a closed $4$-manifold.
\begin{definition}\cite{gaykirby}
Let $X^4$ be a closed $4$-manifold. A {\emph{$(g,k)$-trisection}} of $X^4$ is a triple $(X_1,X_2,X_3)$ where
\begin{itemize}
\item $X_1\cup X_2\cup X_3=X^4$,
\item $X_i\cong\natural_k S^1\times B^3$,
\item $X_i\cap X_j=\boundary X_i\cap\boundary X_j\cong \natural_g S^1\times B^2$
\item $X_1\cap X_2\cap X_3\cong \Sigma_g$,
\end{itemize}

where $\Sigma_g$ is the closed orientable surface of genus $g$.
\end{definition}

Note that from the definition, $(\Sigma_g, X_i\cap X_j, X_i\cap X_k)$ gives a Heegaard splitting of $\partial X_i\cong\#_k (S^1\times S^2)$. By Laudenbach-Poenaru \cite{laudenbach}, $X^4$ is specified by its {\emph{spine}}, $(\Sigma_g\times D^2)\cup_{i,j}(X_i\cap X_j)$. Therefore, we usually describe a trisection $(X_1,X_2,X_3)$ by a {\emph{trisection diagram}} $(\Sigma_g,\alpha,\beta,\gamma)$ where each of $\alpha,\beta,$ and $\gamma$ consist of $g$ independent curves bounding disks in the handlebodies $X_1\cap X_2, X_2\cap X_3, X_1\cap X_3$ respectively. We generally depict this diagram by drawing on $\Sigma_g$ the $\alpha$ curves in red, the $\beta$ curves in blue, and the $\gamma$ curves in green. One typically uses the names $H_\alpha:=X_1\cap X_2, H_\beta:=X_2\cap X_3, H_\gamma:=X_3\cap X_1$ for the double intersections. In words, $H_\alpha$ is usually called ``the $\alpha$ handlebody'' (and similarly $H_\beta$ is the $\beta$ handlebody and $H_\gamma$ is the $\gamma$ handlebody).

For expositions of trisections, refer to \cite{gaykirby} or \cite{propr}.

Next, we define a relative trisection of a compact $4$-manifold with boundary.
\begin{definition}\label{relativedefinition}\cite{gaykirby}
Let $X^4$ be a compact $4$-manifold with boundary $M^3\neq\emptyset$. A \emph{$(g,k,p,b)$-relative trisection} of $X^4$ is a triple $(X_1,X_2,X_3)$ where

\begin{itemize}
\item $X_1\cup X_2\cup X_3=X^4$,
\item $X_i\cong\natural_k S^1\times B^3$,
\item $X_i\cap X_j=\boundary X_i\cap\boundary X_j\cong (\Sigma_g^b\times I)\cup((g-p)$ $2$-handles)\\$\cong\natural_{g+b+p-1} S^1\times B^2$
\item $X_1\cap X_2\cap X_3\cong \Sigma_g^b$,
\end{itemize}

where $\Sigma_g^b$ is an orientable surface of genus $g$ and $b>0$ boundary components. In $X_i\cap X_j$, the $(g-p)$ $2$-handles are attached to cancel $(g-p)$ $1$-handles of $\Sigma_g^b \times I$. This ensures $X_i\cap X_j$ is a $3$-dimensional handlebody of genus $(g+p+b-1)$. Moreover, we have the following conditions on $M^3$:

\begin{itemize}
\item $X_i\cap X_j\cap M^3\cong\Sigma_p^b$,
\item $X_i\cap M^3=(X_i\cap X_j\cap M^3)\times I$.
\end{itemize}

Thus, $(X_1,X_2,X_3)$ determines an open book decomposition on $M^3$, where each $X_i\cap X_j\cap M^3$ is a single page. (In particular, if $M$ is connected, then $X_i\cap X_j\cap M^3$ must be connected.)

Relative trisections of $X^4$ and $Y^4$ with $\boundary X^4 \cong \boundary Y^4 $ can be glued to form a trisection of $X^4 \cup_\boundary Y^4 $ if and only if the relative trisections induce the same (isotopic) open books on $\boundary X^4 \cong\boundary Y^4 $, but with opposite orientations \cite{nickthesis}.

\end{definition}

Again, $X^4$ is specified by its {\emph{spine}}, $(\Sigma_g^b\times D^2)\cup_{i,j}(X_i\cap X_j)$ \cite{monodromypaper}. Therefore, we usually describe a relative trisection $(X_1,X_2,X_3)$ by a {\emph{relative trisection diagram}} $(\Sigma_g^b,\alpha,\beta,\gamma)$ where each of $\alpha,\beta,\gamma$ consist of $(g-p)$ independent curves bounding disks in the compression bodies $X_1\cap X_2, X_2\cap X_3, X_1\cap X_3$ respectively. We generally depict this diagram by drawing on $\Sigma_g^b$ the $\alpha$ curves in red, the $\beta$ curves in blue, and the $\gamma$ curves in green. One typically uses the names $H_\alpha:=X_1\cap X_2, H_\beta:=X_2\cap X_3, H_\gamma:=X_3\cap X_1$ for the double intersections. In words, $H_\alpha$ is usually called ``the $\alpha$ compression body'' (and similarly $H_\beta$ is the $\beta$ compression body and $H_\gamma$ is the $\gamma$ compression body).

For expositions of relative trisections, refer to \cite{nickthesis}, \cite{monodromypaper} or \cite{lefschetz}.

Lastly, we end this section by defining a bridge trisection of a knotted surface in an arbitrary $4$-manifold. 

\begin{definition}
Let $S$ be a surface embedded in a $4$-manifold $X^4$. Say $X^4$ has a $(g,k)$-trisection $(X_1,X_2,X_3)$. By \cite{newsurfacetrisections}, $S$ can be isotoped so that
\begin{itemize}
\item $S\cap X_i$ is a disjoint union of $c$-boundary parallel disks,
\item $S\cap X_i\cap X_j$ is a trivial tangle of $b$ arcs.
\end{itemize}

Note $\chi(S)=3c-b$. We say $S$ is in{\emph{$(c,b)$-bridge position}} in $X^4$ with respect to $(X_1,X_2,X_3)$, or that $(X_1,X_2,X_3)$ induces a \emph{$(c,b)$-bridge trisection} on $S$. We can stabilize $(X_1,X_2,X_3)$ to find a trisection $(\newtilde{X}_1,\newtilde{X}_2,\newtilde{X}_3)$ which induces a $(1,b-3(c-1))$-bridge trisection on $S$ \cite{newsurfacetrisections}.
\end{definition}

For expositions on bridge trisections of surfaces in $4$-manifolds, see \cite{surfacetrisections} or \cite{newsurfacetrisections}.

Bridge trisections admit two kinds of diagrams. One is a diagram (in the standard sense) of the triple of tangles $(X_i\cap X_{i+1},S\cap X_i\cap X_{i+1})\mid_{i=1,2,3}$ (where $X_4:=X_1$). When $g=0$, this diagram consists of $3$-tangle diagrams in disks. This is usally called a {\emph{triplane diagram}}. Another kind of diagram is the {\emph{shadow diagram}}. The shadow diagram is less likely to be familiar to the average reader, so we expand more upon its definition.

\begin{definition}\label{shadowdef}
Let $S$ be a surface in $(c,b)$-bridge position in $X^4$ with respect to the $(g,k)$-trisection $(X_1,X_2,X_3)$. Let $(\Sigma_g,\alpha,\beta,\gamma)$ be a trisection diagram of $(X_1,X_2,X_3)$. Identify $\Sigma_g$ with $X_1\cap X_2\cap X_3$. A {\emph{shadow diagram}} for $S$ is a septuple $(\Sigma_g,\alpha,\beta,\gamma,s_\alpha,s_\beta,s_\gamma)$ where
\begin{itemize}
\item Each $s_*$ is a collection of $b$ disjoint arcs in $\Sigma_g$ with $\boundary s_*=S\cap\Sigma_g$.
\item The collection of $c$ circles $(S\cap(X_1\cap X_2))\cup s_\alpha$ bounds a set of $c$ disjoint embedded disks $D_\alpha$ in $X_1\cap X_2$, with $\int D_\alpha\cap(\boundary(X_1\cap X_2))=\emptyset$. That is, $s_\alpha$ can be obtained by projecting the boundary-parallel tangle $S\cap(X_1\cap X_2)$ onto $\boundary (X_1\cap X_2)=\Sigma$.
\item Similarly, $s_\beta$ and $s_\gamma$ can be obtained by projecting the boundary-parallel tangles $S\cap(X_2\cap X_3)\subset X_2\cap X_3$ and $S\cap(X_1\cap X_3)\subset X_1\cap X_3$ onto $\Sigma_g$, respectively.
\end{itemize}

In this paper, whenever we draw a shadow diagram $(\Sigma_g,\alpha,\beta,\gamma,s_\alpha,s_\beta,s_\gamma)$, we will draw the curves $\alpha,\beta,\gamma$ and arcs $s_\alpha, s_\beta,s_\gamma$ on the surface $\Sigma$. The $\alpha$ curves and $s_\alpha$ arcs will be red; the $\beta$ curves and $s_\beta$ arcs will be blue; the $\gamma$ curves and $s_\gamma$ arcs will be green. The endpoints of $s_*$ will be indicated with black dots. See Figure \ref{fig:shadowexample} for a small example of a shadow diagram.
\end{definition}

\begin{figure}
\includegraphics[width=.6\textwidth]{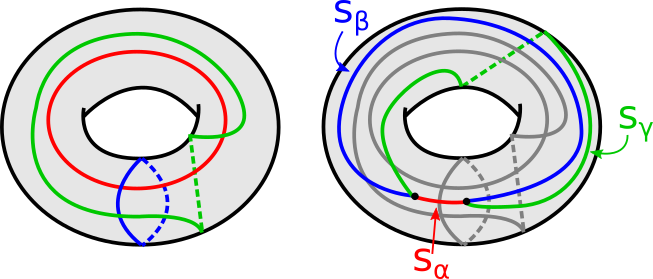}
\caption{Left: A trisection diagram $(\Sigma_1,\alpha,\beta,\gamma)$ for $\mathbb{C}P^2$. Right: A shadow diagram $(\Sigma_1,\alpha,\beta,\gamma,s_\alpha,s_\beta,s_\gamma)$ for a surface in $\mathbb{C}P^2$. The $\alpha, \beta,$ and $\gamma$ curves are visible in the left diagram, so we dim them here to make the shadow arcs $s_*$ more visible. The described surface is in $(1,1)$-bridge position, so the surface is a sphere. (In fact, this is a shadow diagram for the standard $\mathbb{C}P^1$.)}\label{fig:shadowexample}
\end{figure}

The shadow diagram $(\Sigma_g,\alpha,\beta,\gamma, s_\alpha,s_\beta,s_\gamma)$ determines each $X_i\cap X_j$ (up to diffeomorphism) and $S\cap(X_i\cap X_j)$ (up to isotopy). Since $S\cap X_i$ consists of boundary-parallel disks, this determines $S$ up to isotopy.

See \cite{surfacetrisections} and \cite{newsurfacetrisections} for the original definitions of triplane and shadow diagrams and many examples.

\subsection{Kirby diagrams from relative trisections}
\label{sec:getKirby}

%\sout{In general, finding a relative trisection of $X^4$ from a Kirby diagram is difficult. In \cite{trisectionfrompage} Castro, Gay, and Pinz\'on-Caicedo describe how to obtain a relative trisection from a Kirby diagram of $X^4$ and a page of an open-book on $\boundary X^4$ within the diagram. Here,} 

In this subsection, we discuss how to obtain a Kirby diagram of $X^4$ from a relative trisection of $X^4$. This essentially comes from Lemma 14 of \cite{gaykirby} (although they only consider closed manifolds, the procedure is almost exactly the same for manifolds with boundary). An alternate viewpoint is found in \cite{propr}.

Let $(\Sigma,\alpha,\beta,\gamma)$ be a $(g,k,p,b)$-relative trisection of $X^4$. Consider the following handle structure on $X^4$:
\begin{itemize}
\item One $0$-handle and $k$ $1$-handles, glued to make $X_1$.
\item $(g-k+p+b-1)$ $2$-handles, corresponding to $\gamma$ curves which are dual to the $\beta$ curves (up to handle slides). These handles are glued to the $0$- and $1$-handles so that $X_2=(H_\beta\times I)\cup (2$-handles).
\item $(k-2p-b+1)$ $3$-handles, corresponding to parallel (up to handle slides) $\alpha,\gamma$ curves. Then $X_3=((H_\alpha\cup_\Sigma H_\gamma)\times I)\cup (3$-handles).
\end{itemize}

In pratice, drawing the Kirby diagram for this handle decomposition is simple when there are no $3$-handles (i.\,e.\ $k-2p-b+1=0$; i.\,e.\ there are no parallel $\alpha,\gamma$ curves). Perform handle-slides on the $\alpha,\beta$ curves so that the pair $(\alpha,\beta)$ is standard. Then draw a $1$-handle for each cut arc in the $\alpha,\beta$ pages and for each parallel $\alpha,\beta$ curve; draw a $2$-handle for each $\gamma$ curve with framing given by the surface framing. See Figure \ref{fig:Kirbyexamples}.

In the above procedure, the $2$-handles, $3$-handles, and the $1$-handles corresponding to parallel $\alpha$ and $\beta$ curves should be familiar to a reader who has seen the construction of a Kirby diagram for a closed $4$-manifold from a trisection \cite{gaykirby}. The $1$-handles corresponding to cut arcs are more novel. These are apparent by actually considering the gluing $H_\alpha\cup_\Sigma H_\beta$. Take $H_\beta=\Sigma\times[0,1]\cup(2$-handles along $\beta\times 0)$ to be the standard compression body in $S^3$ (see Figure \ref{fig:kirbyexplain}). For each $\alpha$ curve dual to a $\beta$ curve, a $2$-handle can be glued to $\alpha\times 1$ in $S^3$. For each $\alpha$ curve parallel to a $\beta$ curve, we add a parallel dotted circle and attach a $2$-handle along $(\alpha\times 1)$ in $\#_{k-2p-b+1} S^1\times S^2$. This yields $H_\alpha\cup_\Sigma H_\beta\subset \#_{k-2p-b+1} S^1\times S^2$. 

The boundary of $H_\alpha\cup_\Sigma H_\beta$ consists of the $\alpha$ and $\beta$ pages  glued together along collars of their boundary, so $\boundary (H_\alpha\cup_\Sigma H_\beta)$ is a surface of genus-$(2p+b-1)$. Moreover, $(\#_{k+2p+b-1}S^1\times S^2)\setminus (H_\alpha\cup_\Sigma H_\beta)$ is a handlebody of genus-$(2p+b-1)$. The dotted circles obtained from the cut arcs of the $\alpha$ and $\beta$ pages are cores of this handlebody (see Figure \ref{fig:kirbyexplain}). Add these dotted circles to the diagram so that now $H_\alpha\cup_\Sigma H_\beta$ is embedded in $\#_{k}S^1\times S^2$, so that ($\#_{k}S^1\times S^2)\setminus (H_\alpha\cup_\Sigma H_\beta)$ is a product from the $\alpha$ page to the $\beta$ page. The total $\#_{k}S^1\times S^2$ is then exactly $\boundary X_1$. 

We take $X_2$ to be the trace of a cobordism from $H_\beta$ to $H_\gamma$; thicken the $\beta$ compression body and glue $2$-handles for each $\gamma$ curve that is dual to the $\beta$ curves (up to handle slides). By adding $2$-handle attaching circles parallel to these $\gamma$ curves (with framing equal to the surface framing), we can view $X_1$ and $X_2$ as living in the $4$-manifold described by this Kirby diagram. Finally, to include $X_3$ in this diagram we attach $3$-handles along essential spheres in $H_\alpha\cup_\Sigma H_\gamma$ (i.e. one $3$-handle for each $\gamma$ curve parallel to an $\alpha$ curve, up to handle slides.)

\begin{figure}
{\center{\includegraphics[height=30mm]{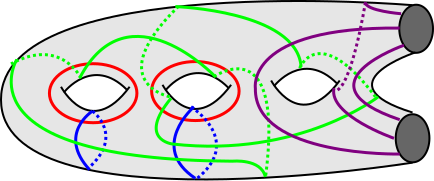}\hspace{.25in}\includegraphics[height=30mm]{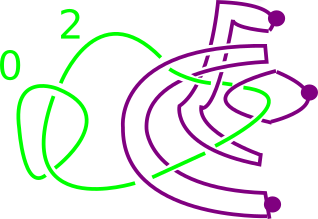}\\\vspace{.2in}\includegraphics[height=25mm]{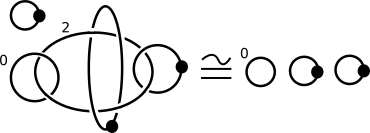}}}
\caption{Top left: a $(g,k,p,b)=(3,3,1,2)$-trisection diagram. Here, $\alpha$ and $\beta$ are standard. We have drawn a cut-system (purple) for the $\alpha$ and $\beta$ pages. Top right: We find a Kirby diagram for the pictured $4$-manifold. Each cut arc doubles to a $1$-handle curve; push one copy into the $\alpha$ compression body (``outside the surface'') and the other into the $\beta$-compression body (``inside the surface''). The $\gamma$ curves become $2$-handle attaching circles with framing given by the surface framing. There are no $3$- or $4$-handles. Bottom: We can now easily see that the pictured $4$-manifold is $(S^2\times D^2)\natural (S^1\times B^3)\natural (S^1\times B^3)$.}
\label{fig:Kirbyexamples}
\end{figure}

\begin{figure}
\includegraphics[width=\textwidth]{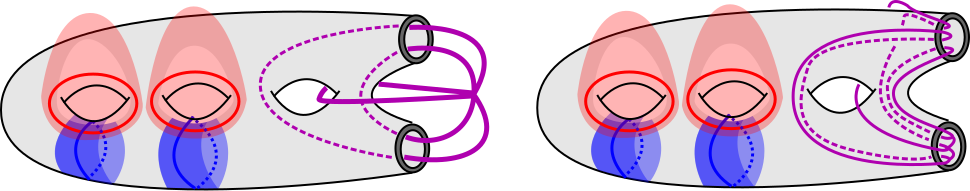}
\caption{We still consider the relative trisection of Figure \ref{fig:Kirbyexamples}. Left: $H_\alpha\cup_\Sigma H_\beta$ lies inside $\#_{k-2p-b+1} S^1\times S^2$. The complement of $H_\alpha\cup_\Sigma H_\beta$ is a handlebody; we have drawn the spine. Right: We isotope the spine to see cores of the handlebody $(\#_{k-2p-b+1} S^1\times S^2)\setminus(H_\alpha\cup_\Sigma H_\beta)$ as doubles of a cut system of the $\alpha$ and $\beta$ pages.}
\label{fig:kirbyexplain}
\end{figure}

Finding a relative trisection of $X^4$ from a Kirby diagram is more challenging. In \cite{trisectionfrompage} Castro, Gay, and Pinz\'on-Caicedo describe how to obtain a relative trisection from a Kirby diagram of $X^4$ and a page of an open-book on $\boundary X^4$ within the diagram.

\section{The Price Twist}\label{sec:price}
\subsection{Introduction}
\label{sec:pricetwist}
Let $S$ be a real projective plane embedded in a $4$-manifold $X^4$, with Euler number $e(S)=\pm 2$ (e.\,g.\ any $\RP^2$ in $S^4$). A tubular neighborhood of $S$ admits a handle structure consisting of a $0$-handle, a $1$-handle, and a $0$-framed $2$-handle running twice over the $1$-handle (Figure \ref{fig:pplusnbhd}). We call this tubular neighborhood $P_+$ or $P_-$, depending on the sign of $e(S)$. The boundary of $P$ is the quaternion space $Q$, named because it is the quotient of $S^3$ by the action of the quaternion group. A perhaps more useful description for low-dimensional topologists is that $Q$ is a Seifert-fibered space over $S^2$, with three exceptional fibers of index $\pm2,\pm2,\mp2$. Following \cite{katanaga}, we call these fibers $S_0,S_1,S_{-1}$. We do not specify the indices of these fibers, as they may be permuted by a homeomorphism of $Q$. See Figure \ref{fig:pplusnbhd} for an illustration of $S_0, S_1, S_{-1}$ in $Q=\boundary P_-$.

\begin{figure}
\includegraphics[height=35mm]{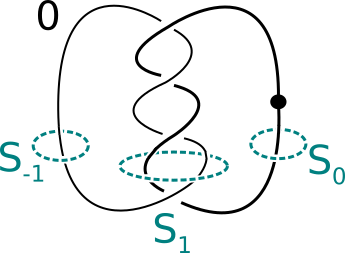}
\caption{A kirby diagram of $P_-$. (Mirror the diagram to obtain $P_+$). $Q=\boundary P_{\pm}$ is a Seifert fibered space with three singular fibers $S_0,S_1,S_{-1}$ as pictured. The $S_{-1}$ fiber bounds a disk in $P_{\pm}$.}
\label{fig:pplusnbhd}
\end{figure}

Note $\boundary (X^4\setminus \nu(S))\cong Q$. Label the singular fibers in $\boundary(X^4\setminus\nu(S))$ so that the trivial regluing of $\nu(S)$ to obtain $X^4$ corresponds to the map $\boundary(\nu(S))\to\boundary(X^4\setminus S)$ given by $(S_1,S_0,S_{-1})\mapsto(S_1,S_0,S_{-1})$

Price \cite{pricepaper} has classified the self-homeomorphisms of $Q$, finding that there are six up to isotopy. These maps preserve the Seifert fiber structure, and are determined simply by the induced permutation of the singular fibers. Moreover, Price showed that the map that permutes $S_0,S_1$ extends over $P_{\pm}$. Therefore, there are at most three $4$-manifolds (up to diffeomorphism) that may arise from deleting $\nu(S)$ from $X^4$ and regluing according to $\phi:\boundary\nu(S)\to\boundary (X^4\setminus\nu(S))$. These $4$-manifolds are:

\begin{itemize}
\item $X^4$, when $\phi(S_{-1})=S_{-1}$.
\item $\tau_S(X^4)$, when $\phi(S_{-1})=S_{0}$. Using the Mayer-Vietoris sequence, we see $H_1(\tau_S(X^4))\neq H_1(X^4)$.
\item $\Sigma_S (X^4)$, when $\phi(S_{-1})=S_1$.
\end{itemize}

We call $\Sigma_S(X^4)$ the Price twist of $X^4$ along $S$. In the notation of Akbulut and Yasui \cite{plugs}, 
this operation is equivalent to a certain plug twist. See \cite{plugs}, \cite{akbulutrp2} for more on this point of view. For our purposes, it is enough to notice that the Price twist is a way of constructing potentially exotic $4$-manifolds, and is most interesting in the case $X^4=S^4$. When $X^4=S^4$, $\Sigma_S(S^4)$ is a homotopy $4$-sphere. It is unknown under which conditions $\Sigma_S(X^4)$ is diffeomorphic to $X^4$. Katanaga et.\ al.\ \cite{katanaga} showed that this operation generalizes the Gluck twist: If $K$ is a $2$-sphere smoothly embedded in $X^4$ with trivial normal bundle, and $P$ is a unknotted real projective plane in a $4$-ball disjoint from $K$, then the Gluck twist of $X^4$ along $K$ is diffeomorphic to $\Sigma_{K\# P}(X^4)$. In particular, this means that in some cases $\Sigma_S(X^4)\not\cong X^4$ (see for example \cite{akbulut}), but there are of course no known examples of this phenomenon in $S^4$. (In fact, the authors are unaware of any examples of this phenomenon in any orientable $4$-manifold. The surgeries displayed in \cite{akbulut} which change the smooth structure of the ambient $4$-manifold all take place in non-orientable $4$-manifolds.) Whether the Price twist strictly generalizes the Gluck twist in $S^4$ is related to the \emph{Kinoshita conjecture}.

\begin{question}[Kinoshita]
Given a real projective plane $S$ smoothly embedded in $S^4$, can $S$ be decomposed as $K\# P$ for some $2$-sphere $K$ and an unknotted real projective plane $P$?
\end{question}

The answer to the above question is known to be ``yes'' in some cases; see e.\,g.\ \cite{kamada}. One might study the Kinoshita conjecture by understanding the basic algebraic topology of $S^4\setminus \nu(S)$. Suppose $S=K\# P$ for a $2$-knot $K$ and unknotted real projective plane $P$. Then $S^4\setminus\nu(S)\cong (S^4\setminus\nu(K))\natural (S^4\setminus\nu(P))$, so $\pi_1(S^4\setminus\nu(S))\cong \pi_1(S^4\setminus K\mid [\gamma]^2=0)$, where $[\gamma]$ is a normal generator of $\pi_1(S^4\setminus K)$. Fundamental groups of $2$-knot complements have been classified as admitting certain presentations by Kamada \cite{2knotgroups}, so $\pi_1(S^4\setminus\nu(S))\cong \pi_1(\tau_S(S^4))$ is a potential obstruction to $S$ admitting the decomposition $K\#P$.

\subsection{Two preferred trisections of $P_{\pm}$}\label{sec:prefer}
\label{sec:preferredtrisections}
In this section, we describe two particular $(g,k,p,b)=(2,2,0,3)$-relative trisections of $P_{+}$ (the mirror images of which are naturally trisections of $P_-$), shown in Figure \ref{fig:trisection1and2}. Call these trisections $T_1$ and $T_2$. From $T_i$, the dicussion in Section \ref{sec:getKirby} yields a Kirby diagram, from which we can check that the trisected manifold is indeed $P_+$ (see Figure \ref{fig:checkKirby}). The three singular fibers of $Q=\boundary P_+$ form the bindings of the open book induced by $T_i$ on $Q=\boundary P_+$, and each page has monodromy consisting of two right-handed Dehn twists around two boundary components and two left-handed Dehn twists around the third. In $T_1$, the twists around the $S_{-1}$ boundary are left-handed. In $T_2$, the twists around the $S_{-1}$ boundary are right-handed (See Figs. \ref{fig:monodromy1},\ref{fig:monodromy2}).

\begin{figure}
\includegraphics[height=50mm]{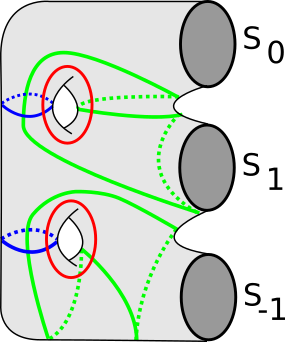}\hspace{.5in}\includegraphics[height=50mm]{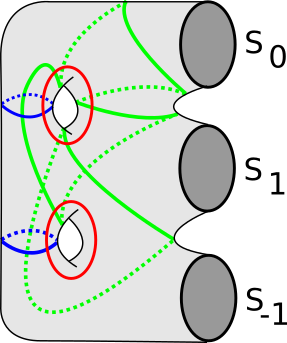}
\caption{Left: The relative trisection $T_1$ of $P_+$. Right: the relative trisection $T_2$ of $P_+$. These are both $(g,k,p,b)=(2,2,0,3)$-relative trisections.}
\label{fig:trisection1and2}
\end{figure}

\begin{figure}
\includegraphics[width=40mm]{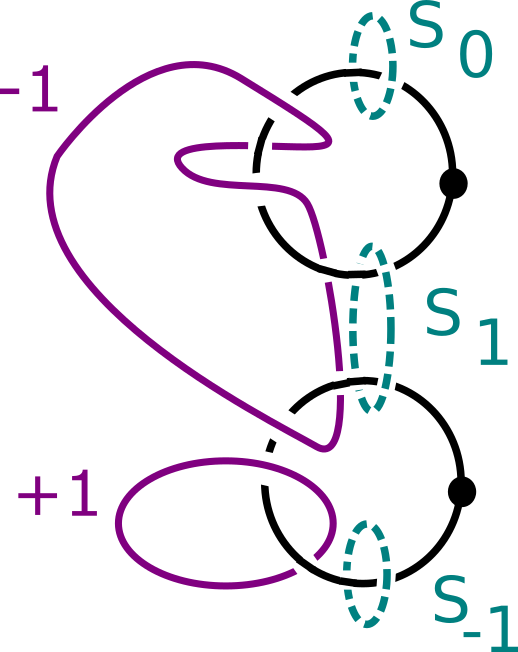}\hspace{.5in}\includegraphics[width=40mm]{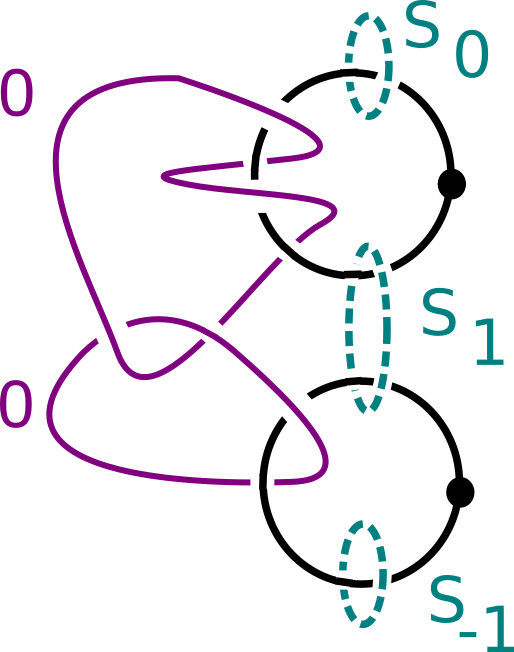}
\caption{Left: Kirby diagram obtained from $T_1$. Right: Kirby diagram obtained from $T_2$. Neither diagram has any $3$- or $4$-handles. Both depict $P_+$. The $S_i$ are the bindings of the open book on $Q$ (shown here after some handle slides).}
\label{fig:checkKirby}
\end{figure}

\begin{figure}
\includegraphics[height=50mm]{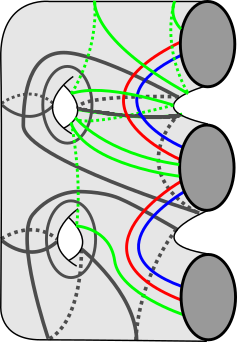}\hspace{.5in}\includegraphics[height=50mm]{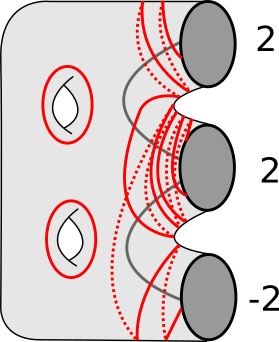}
\caption{We perform the algorithm of \cite{monodromypaper} to find the monodromy of the open book induced on $Q$ by $T_1$. Left: resulting cut systems for each page $X_i\cap X_j\cap\boundary P_+$. Right: the effect of the monodromy on the $X_1\cap X_2\cap\boundary P_+$ system. The monodromy automorphism consists of two right-handed Dehn twists around each of $S_0,S_1$ and two left-handed twists around $S_{-1}$.}
\label{fig:monodromy1}
\end{figure}
\begin{figure}
\includegraphics[height=50mm]{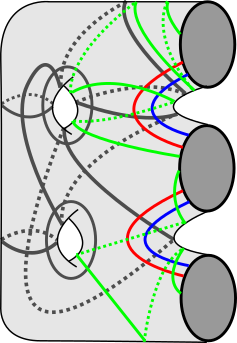}\hspace{.5in}\includegraphics[height=50mm]{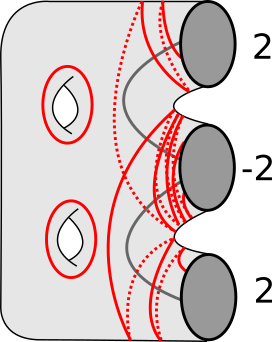}
\caption{We perform the algorithm of \cite{monodromypaper} to find the monodromy of the open book induced on $Q$ by $T_2$. Left: resulting cut systems for each page $X_i\cap X_j\cap\boundary P_-$. Right: the effect of the monodromy on the $X_1\cap X_2\cap\boundary P_+$ system. The monodromy automorphism consists of two right-handed Dehn twists around each of $S_0,S_{-1}$ and two left-handed twists around $S_{1}$.}
\label{fig:monodromy2}
\end{figure}

%%%%%%%%%%%%%%%%%%%%%%%%

\section{Relative trisections of surface complements}\label{sec:complement}
\label{sec:trisectcomplement}
In this section, we show how to produce a relative trisection of the complement of a surface $S$ in a $4$-manifold $X^4$. In particular, we can produce a $(g,k,p,b)=(g,k,0,3)$-trisection of any $\RP^2$ complement in $X^4$. 
For notational ease, we will often use the shorthand ``$\cap\boundary$'' to denote ``$\cap\boundary(X^4\setminus\nu(S))$''.

Naively, one might attempt to construct a trisection of a surface complement in the following (generally incorrect) way.
\begin{itemize}
    \item Say $X^4$ is a $4$-manifold with trisection $(X_1,X_2,X_3)$. 
    \item Let $S\subset X^4$ be a surface in $(c,b)$-bridge position with respect to the trisection $(X_1,X_2,X_3)$.
    \item Delete a tubular neighborhood of $S$ from each $X_i$ to find a trisection on $X^4\setminus\nu(S)$.
\end{itemize}

This procedure is successful exactly when $S\cong S^2$ and $c=1$ (as in \cite{jeffgluck}). Otherwise, this procedure {\emph{does not}} yield a relative trisection of $X^4 \setminus \nu(S)$ for (in part) the following reason. 
Recall from Definition \ref{relativedefinition} that a relative trisection must induce an open book on the $3$-dimensional boundary, with pages $X_i\cap X_j\cap\boundary$. In this setting, $((X_i\cap X_j)\setminus\nu(S))\cap\boundary(X^4\setminus\nu(S))\cong \sqcup_b (S^1\times I)$, which is a collection of $b$ annuli corresponding to the bridges of $S$ in $X_i\cap X_j$. Note that if $b=1$, then $S$ is a sphere, since $\chi(S)=3c-b$. If $b>1$, then $((X_i \cap X_j)\setminus\nu(S))\cap\boundary(X^4\setminus\nu(S))$ is disconnected, and certainly cannot be a page in an open book on $\boundary(X^4\setminus(S))$.

To deal with this problem, we introduce an operation on manifolds $Y^4$ divided into three pieces   $Y^4=Y_1\cup Y_2\cup Y_3$ (not necessarily as a relative trisection, but still assuming $Y_i\cap Y_j=\boundary Y_i\cap\boundary Y_j$) that can reduce the number of components of $Y_i\cap Y_j\cap\boundary Y^4$.

\begin{definition}\label{def:bdystab}
Let $Y^4$ be a $4$-manifold with nonempty boundary. Let $Y^4 =Y_1\cup Y_2\cup Y_3$, where $Y_i\cap Y_j=\boundary Y_i\cap\boundary Y_j$. Let $C$ be an arc properly embedded in $Y_i\cap Y_j\cap\boundary Y^4$ with endpoints in $Y_1\cap Y_2\cap Y_3$. Let $\nu(C)$ be a fixed open tubular neighborhood of $C$. Let
\begin{itemize}
\item $\newtilde{Y}_i:=Y_i\setminus\nu(C)$,
\item $\newtilde{Y}_j:= Y_j\setminus\nu(C)$,
\item $\newtilde{Y}_k:=Y_k\cup\overline{\nu(C)}$.
\end{itemize}

We refer to the replacement of the triple $(Y_1,Y_2,Y_3)$ by $(\newtilde{Y}_1,\newtilde{Y}_2,\newtilde{Y}_3)$ as a \emph{boundary-stabilization}. We say that we have boundary-stabilized $Y_k$.

This move is depicted in Figure \ref{fig:neighborhoodarc}.
\end{definition}

We illustrate the effect of this stabilization on each of $Y_1, Y_2, Y_3$, double intersection, and triple intersection by studying trisections of $\RP^2$ complements in the following subsection. In Section \ref{sec:general}, we will use the same principle to trisect the complement of an arbitrary surface complement.

\subsection{Trisecting complements of embedded $\RP^2$s} \label{sec:rp2subsection}

Let $(X_1,X_2,X_3)$ be a $(g,k)$-trisection of $X^4$. Let $S$ be an embedded $\RP^2$ in $X$. By \cite{newsurfacetrisections}, $S$ can be isotoped to be in bridge position with respect to $(X_1,X_2,X_3)$. 
We stabilize the trisection as in \cite{newsurfacetrisections} so that $S\cap X_i=D^2$ for each $i$. 
Then $S\cap X_i\cap X_j$ is a trivial $2$-bridge tangle. 

Delete a tubular neighborhood of $S$ from $X^4$; let $X'_i:=X_i\setminus\nu(S)$.  For each $i=1,2,3$, with $\{i,j,k\}=\{1,2,3\}$, let $C_i$ be an arc in $X'_j\cap X'_k\cap\boundary (X^4\setminus\nu(S))$ (with endpoints in $X'_1\cap X'_2\cap X'_3)$ which meets two different boundary components of $X'_1\cap X'_2\cap X'_3$. Take $C_1,C_2,C_3$ to have disjoint endpoints, and to twist zero times around the $3$-dimensional tubes of $\nu(S)\cap (X_i\cap X_j)$ (i.e. we ask $S\cap (X_i\cap X_j)$ cobounds disks $D$ in $X_i\cap X_j$ with arcs in $X_i\cap X_j\cap X_k$ so that $C_k\cap D=\emptyset$). This will not matter until we attempt to find the resulting relative trisection diagram. In principal, one could allow $C_i$ to twist around $\nu(S)\cap(X_j\cap X_k)$ arbitrarily and modify the relative trisection accordingly, but for simplicitly we restrict the intersection). We will boundary-stabilize each $X_i$ along $C_i$ to find a relative trisection of $X^4 \setminus \nu(S)$. 

The boundary-stabilization move of $X'_3$ along $C:=C_3$ is pictured in Figures \ref{fig:setuparcs} and \ref{fig:neighborhoodarc}. For ease of discussion, we write $\overline{\nu(C)}=[-\epsilon,1+\epsilon]\times B$, where $B$ is a (closed) half $3$-ball. Write $\boundary B=D_+\cup D_-$, where $D_+$ and $D_-$ are each disks, and $\nu(C)\cap\boundary (X^4\setminus\nu(S))=[-\epsilon,1+\epsilon]\times D_-$. Write $B$ as the union of two quarter (closed) $3$-balls $B_1$ and $B_2$ along a disk so that $\overline{\nu(C)}\cap X'_1=I\times B_1$, $\overline{\nu(C)}\cap X'_2=I\times B_2$, and $\overline{\nu(C)}\cap X'_3=([-\epsilon,0]\cup[1,1+\epsilon])\times B$. See Figure \ref{fig:halfball}.

\begin{figure}
\includegraphics[width=.3\textwidth]{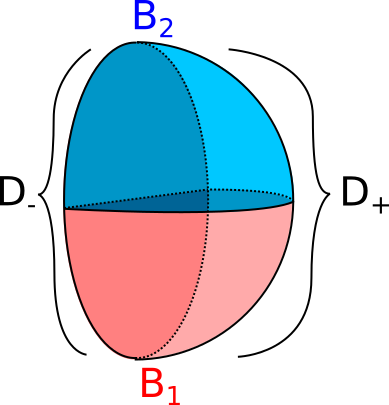}
\caption{To understand the effect of boundary-stabilizing along arc $C$ in $X'_1\cap X'_2\cap\boundary$, we write $\overline{\nu(C)}$ as $[-\epsilon,1+\epsilon]\times B$, where $B$ is a closed half $3$-ball. Here, we draw $B$. We decompose $B$ into two quarter-balls $B_1$ and $B_2$, so that $\overline{\nu(C)}\cap X'_1=I\times B_1$ and $\overline{\nu(C)}\cap X'_2=I\times B_2$. The boundary of $B$ is decomposed into two disks, $D_-$ and $D_+$ (which both meet $B_1$ and $B_2$) so that $\overline{\nu(C)}\cap\boundary=[-\epsilon,1+\epsilon]\times D_-$.}\label{fig:halfball}
\end{figure}

This boundary-stabilization increases the genus of $X'_3$, $X'_1\cap X'_3$ and $X'_2\cap X'_3$ each by one. We illustrate the effect on the $X'_i\cap X'_j\cap\boundary (X^4\setminus\nu(S))$ and $X'_1\cap X'_2\cap X'_3$ in Figures \ref{fig:pageeffect} and \ref{fig:tripleintersection}; each of these figures relate to a schematic triplane diagram. We discuss the topology of each trisection piece before and after the boundary-stabilization move in greater detail in the following paragraphs. (Many of these paragraphs are repetitive due to the symmetry of a trisection, but we consider each piece separately for clarity.) Recall the notation ``$\cap\boundary$'' means ``$\cap\boundary(X^4\setminus\nu(S))$''.

\begin{figure}
\includegraphics[width=60mm]{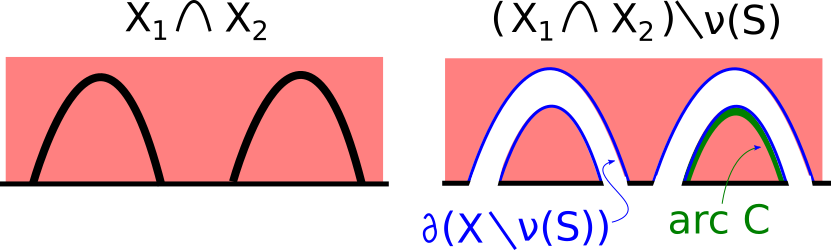}
\caption{Left: $S\cap X_1\cap X_2$. Right: We find an arc $C$ in $X'_1\cap X'_2\cap\boundary$ which meets two different boundary components of $X'_1\cap X'_2\cap X'_3$ [i.\,e.\ an arc that runs along one bridge.]}
\label{fig:setuparcs}
\end{figure}

\begin{figure}
\includegraphics[width=\textwidth]{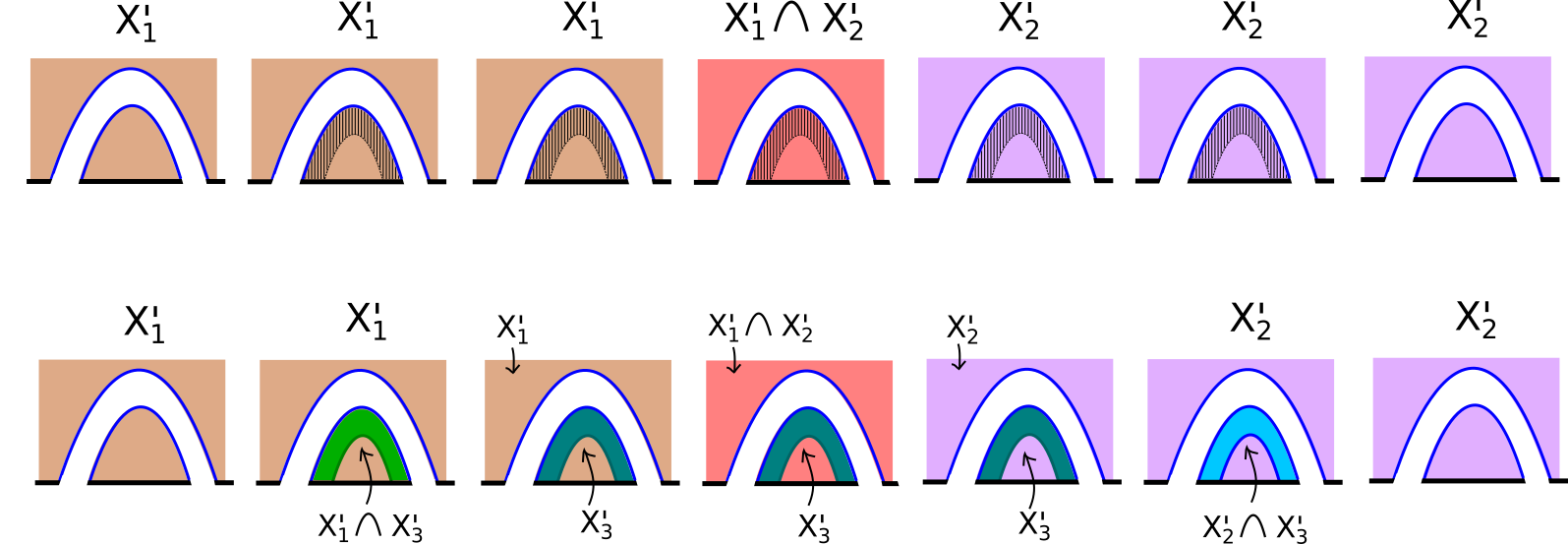}
\caption{Top: The shaded regions are slices of a neighborhood of $C$ in $X^4\setminus\nu(S)$ before stabilizing. Bottom: To boundary-stabilize, we declare this neighborhood is in $X'_3$.}
\label{fig:neighborhoodarc}
\end{figure}

\begin{figure}
\includegraphics[width=\textwidth]{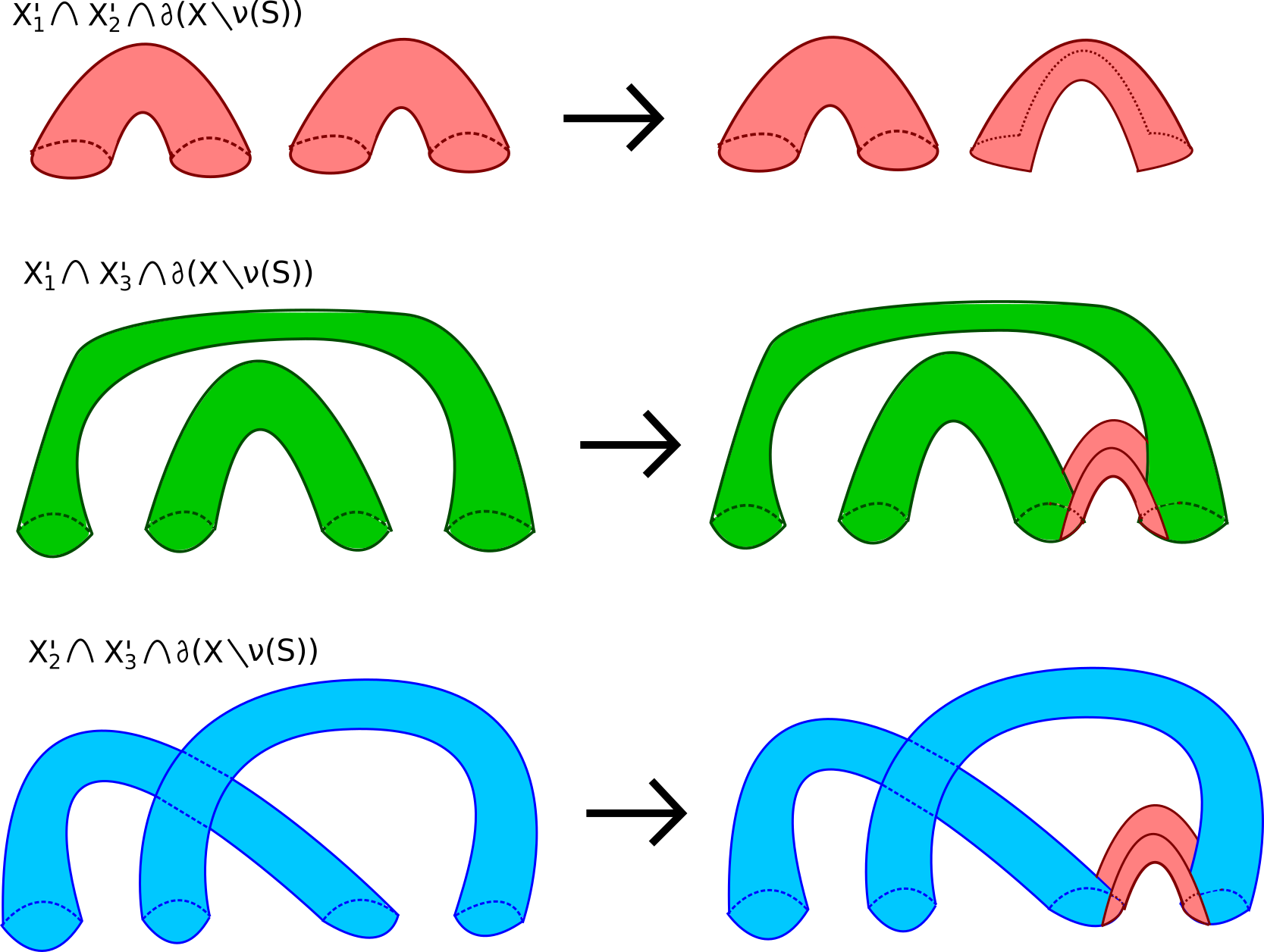}
\caption{The boundary-stabilization move deletes a band from $X'_1\cap X'_2\cap\boundary (X^4 \setminus\nu(S))$, and adds a band to each of $X'_1\cap X'_3\cap\boundary$ and $X'_2\cap X'_3\cap\boundary$. These are not literally three instances of the same band. The band deleted from $X'_1\cap X'_2\cap\boundary$ is $I\times (B_1\cap B_2\cap D_-)$. The band added to $X'_1\cap X'_3\cap\boundary$ is $I\times(B_1\cap D_+\cap D_-)$. The band added to $X'_2\cap X'_3$ is $I\times(B_2\cap D_+\cap D_-)$.}
\label{fig:pageeffect}
\end{figure}

\begin{figure}
\includegraphics[width=\textwidth]{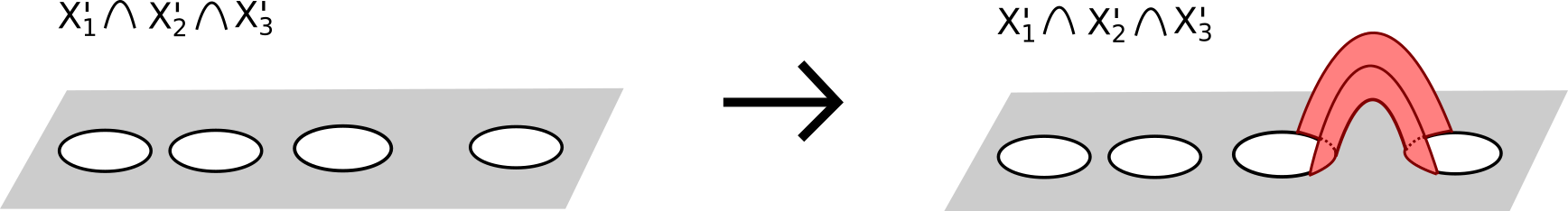}
\caption{The boundary-stabilization move adds a band  $I\times (B_1\cap B_2\cap B_+)$ to the triple intersection $X'_1\cap X'_2\cap X'_3$.}
\label{fig:tripleintersection}
\end{figure}

{$\mathbf{X'_1.}$} Before boundary-stabilizing $X'_3$, $X'_1\cong\natural_{k+1} S^1\times B^3$. After boundary-stabilizing $X'_3$, $X'_1\cong\natural_{k+1} S^1\times B^3$. From $X'_1$, in the boundary-stabilization we carve out a neighborhood of arc $C$ in $\boundary X'_1$. This does not change the topology of $X'_1$.

{$\mathbf{X'_2.}$} Before boundary-stabilizing $X'_3$, $X'_2\cong\natural_{k+1} S^1\times B^3$. After boundary-stabilizing $X'_3$, $X'_2\cong\natural_{k+1} S^1\times B^3$. From $X'_2$, in the boundary-stabilization we carve out a neighborhood of arc $C$ in $\boundary X'_2$. This does not change the topology of $X'_2$.

{$\mathbf{X'_3.}$} Before boundary-stabilizing $X'_3$, $X'_1\cong\natural_{k+1} S^1\times B^3$. After boundary-stabilizing $X'_3$, $X'_3\cong\natural_{k+2} S^1\times B^3$. To $X'_3$, we add a neighborhood of arc $C$ (recall $\boundary C\in\boundary X'_3$, $\int C\cap X'_3=\emptyset$). The effect is to add another boundary-sum $S^1\times B^3$ component to $X'_3$.

{$\mathbf{X'_1\cap X'_2.}$} Before boundary-stabilizing $X'_3$, $X'_1\cap X'_2\cong \natural_{g+2} S^1\times B^2$. After boundary-stabilizing $X'_3$, $X'_1\cap X'_2\cong \natural_{g+2} S^1\times B^2$. From $X'_1\cap X'_2$,  in the boundary-stabilization we carve out a neighborhood of arc $C$ in $\boundary X'_1\cap X'_2$. This does not change the topology of $X'_1\cap X'_2$.

{$\mathbf{X'_2\cap X'_3.}$} Before boundary-stabilizing $X'_3$, $X'_2\cap X'_3\cong \natural_{g+2} S^1\times B^2$. After boundary-stabilizing $X'_3$, $X'_2\cap X'_3\cong \natural_{g+3} S^1\times B^2$. Then to $X'_2\cap X'_3$, in the boundary-stabilization we add $I\times B_2$. The effect is to add another boundary-sum $S^1\times B^2$ component to $X'_2\cap X'_3$.

{$\mathbf{X'_1\cap X'_3.}$} Before boundary-stabilizing $X'_3$, $X'_1\cap X'_3\cong \natural_{g+2} S^1\times B^2$. After boundary-stabilizing $X'_3$, $X'_1\cap X'_3\cong \natural_{g+3} S^1\times B^2$. To $X'_1\cap X'_3$, in the boundary-stabilization we add $I\times B_1$. The effect is to add another boundary-sum $S^1\times B^2$ component to $X'_1\cap X'_3$.

{$\mathbf{X'_1\cap X'_2\cap X'_3.}$} Before boundary-stabilizing $X'_3$, $X'_1\cap X'_2\cap X'_3\cong \Sigma_{g,4}$, a genus-$g$ surface with $4$ open disks deleted. After boundary-stabilizing $X'_3$, $X'_1\cap X'_2\cap X'_3\cong \Sigma_{g,4}\cup($a band) %(here, we have $X'_1\cap X'_2\cap X'_3\cong \Sigma_{g+1},3$).
To $X'_1\cap X'_2\cap X'_3$, we add the band $I\times( B_1\cap B_2\cap D_+)$. In thiscase, we have assumed that the band meets two distinct boundary components of $\Sigma_{g,4}$, so the attachment yields $\Sigma_{g+1,3}$. Note now that $C_1$ or $C_2$ may meet only one boundary component of $\Sigma_{g+1,3}$.

{$\mathbf{X'_1\cap\boundary.}$} Before boundary-stabilizing $X'_3$, we have $X'_1\cap\boundary\cong S^1\times B^2$. After boundary-stabilizing $X'_3$, $X'_1\cap\boundary \cong S^1\times B^2$. From $X'_1\cap\boundary $, in the boundary-stabilization we carve out a neighborhood of arc $C$ in $\boundary (X'_1\cap\boundary)$. This does not change the topology of $X'_1\cap\boundary$.

{$\mathbf{X'_2\cap\boundary.}$} Before boundary-stabilizing $X'_3$, we have $X'_2\cap\boundary \cong S^1\times B^2$. After boundary-stabilizing $X'_3$, $X'_2\cap\boundary \cong S^1\times B^2$. From $X'_2\cap\boundary$, in the boundary-stabilization we carve out a neighborhood of arc $C$ in $\boundary (X'_2\cap\boundary)$. This does not change the topology of $X'_2\cap\boundary$.

{$\mathbf{X'_3\cap\boundary.}$} Before boundary-stabilizing $X'_3$, we have $X'_3\cap\boundary\cong S^1\times B^2$. After boundary-stabilizing $X'_3$, $X'_3\cap\boundary\cong\natural_{2} S^1\times B^2$. To $X'_3\cap\boundary$, we add $\nu(C)\cap\boundary=($arc parallel to $C)\times D^2$ (recall $C\subset X'_1\cap X'_2\cap \boundary X$). The effect is to add another boundary-sum $S^1\times B^2$ component to $X'_3\cap\boundary$.

{$\mathbf{X'_1\cap X'_2\cap\boundary.}$} Before boundary-stabilizing $X'_3$, $X'_1\cap X'_2\cap\boundary \cong \sqcup_2 S^1\times I=($two annuli). After boundary-stabilizing $X'_3$, $X'_1\cap X'_2\cap\boundary\cong D^2\sqcup (S^1\times I)$.From $X'_1\cap X'_2\cap\boundary$,  in the boundary-stabilization we carve out a neighborhood of arc $C$, which is a cocore of one annulus.

{$\mathbf{X'_2\cap X'_3\cap\boundary.}$} Before boundary-stabilizing $X'_3$, $X'_2\cap X'_3\cap\boundary\cong \sqcup_2 S^1\times I=$(two annuli). After boundary-stabilizing $X'_3$, $X'_2\cap X'_3\cap\boundary \cong(\sqcup_2 S^1\times I)\cup($band$)\cong \Sigma_{0,3}$. To $X'_2\cap X'_3\cap\boundary$, we add $I\times(B_2\cap D_+\cap D_-)$.

{$\mathbf{X'_1\cap X'_3\cap\boundary.}$} Before boundary-stabilizing $X'_3$, $X'_1\cap X'_3\cap\boundary\cong\sqcup_2 S^1\times I=$(two annuli). After boundary-stabilizing $X'_3$, $X'_1\cap X'_3\cap\boundary \cong(\sqcup_2 S^1\times I)\cup($band$)\cong \Sigma_{0,3}$. To $X'_1\cap X'_3\cap\boundary$, we add $I\times(B_1\cap D_+\cap D_-)$.

{$\mathbf{X'_1\cap X'_2\cap X'_3\cap\boundary.}$} Before boundary-stabilizing $X'_3$, $X'_1\cap X'_2\cap X'_3\cap\boundary\cong\sqcup_4 S^1$, the boundary components of $X'_1\cap X'_2\cap X'_3$. After boundary-stabilizing $X'_3$, $X'_1\cap X'_2\cap X'_3\cap\boundary \cong \sqcup_3 S^1$. During the boundary-stabilization, we surger $X'_1\cap X'_2\cap X'_3\cap\boundary$ along the two intervals in $\nu(C)$. The band we surger along is $I\times(B_1\cap B_2\cap D_-)$; we delete neighborhoods of $\boundary I\times (B_1\cap B_2\cap D_-)$ and glue in the remaining boundary. We assumed that the endpoints of $C$ met different boundary components of $X'_1\cap X'_2\cap X'_3$, which causes the number of components of $X'_1\cap X'_2\cap X'_3\cap\boundary$ to decrease after the boundary-stabilization. If we did the same procedure on an arc with both endpoints on one boundary component then after the boundary-stabilization we would have had $X'_1\cap X'_2\cap X'_3\cap \boundary=\sqcup_5 S^1$. 
This concludes the analysis of the effect of boundary-stabilizing $X'_3$ along $C=C_3$.

Now similarly boundary-stabilize $X'_1$ along $C_1$ and $X'_2$ along $C_2$ to obtain $X^4\setminus\nu(S)=\newtilde{X}_1\cup\newtilde{X}_2\cup\newtilde{X}_3$ (where $\newtilde{X}_i$ is obtained from $X'_i$ by performing all three boundary stabilizations). The end result will have $\newtilde{X}_i\cap \newtilde{X}_j\cap\boundary(X^4\setminus\nu(S))=$(thrice punctured sphere) or (once punctured torus), depending on our choice of arcs $C_1, C_2$, and $C_3$ (see Figure \ref{fig:pageinboundary}).

One point of these boundary-stabilizations is to ensure that $\newtilde{X}_i\cap \newtilde{X}_j\cap\boundary (X^4\setminus\nu(S))$ is connected. We must furthermore check that $\newtilde{X}_i \cap\boundary(X^4\setminus\nu(S))$ is a product $(\newtilde{X}_i\cap \newtilde{X}_j\cap\boundary (X^4\setminus\nu(S)))\times I$ and also a product $(\newtilde{X}_i\cap \newtilde{X}_k\cap\boundary (X^4\setminus\nu(S)))\times I$ (and that these two product structures agree), so that there is an induced open book on the boundary of $X^4\setminus\nu(S)$.

\begin{claim}
 $\newtilde{X}_1\cap\boundary(X^4\setminus \nu(S))$ is a product over $\newtilde{X}_1\cap \newtilde{X}_2\cap\boundary(X^4\setminus \nu(S))$ and over $\newtilde{X}_1\cap \newtilde{X}_3\cap\boundary (X^4\setminus\nu(S))$. Moreover, these product structures agree.
\end{claim}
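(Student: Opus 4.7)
The plan is to reduce the claim to the standard characterization of trivial compression bodies: a compact orientable irreducible $3$-manifold $H$ with boundary partitioned as $\boundary_-H\cup\boundary_+H$ along a $1$-manifold binding is a product $H\cong\boundary_-H\times I$ (with $\boundary_-H\leftrightarrow\boundary_-H\times\{0\}$ and $\boundary_+H\leftrightarrow\boundary_-H\times\{1\}$) if and only if the inclusion $\boundary_-H\hookrightarrow H$ induces a $\pi_1$-isomorphism. Once such a trivial compression body structure on $\newtilde X_1\cap\boundary(X^4\setminus\nu(S))$ is verified, both claimed product structures emerge: the first by taking $\boundary_-=\newtilde X_1\cap\newtilde X_2\cap\boundary(X^4\setminus\nu(S))$, and the second symmetrically by swapping roles. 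Both come from the same underlying compression body, so the two product structures agree up to reversal of the $I$-direction.

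The handlebody structure of $\newtilde X_1\cap\boundary(X^4\setminus\nu(S))$ and the decomposition of its boundary as $(\newtilde X_1\cap\newtilde X_2\cap\boundary)\cup(\newtilde X_1\cap\newtilde X_3\cap\boundary)$ glued along the binding $\newtilde X_1\cap\newtilde X_2\cap\newtilde X_3\cap\boundary$ are already established by the preceding piece-by-piece analysis. So the remaining task is to verify that the inclusion $\newtilde X_1\cap\newtilde X_2\cap\boundary\hookrightarrow\newtilde X_1\cap\boundary$ is a $\pi_1$-isomorphism; since both groups are free of rank $2$ (from the same analysis), Hopfianness of free groups reduces this to surjectivity.

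For surjectivity, I decompose $\newtilde X_1\cap\boundary = \bigl((X'_1\cap\boundary)\setminus(\nu(C_2)\cup\nu(C_3))\bigr)\cup(\overline{\nu(C_1)}\cap\boundary)$ and apply Seifert--van Kampen. The indents from $\nu(C_2)$ and $\nu(C_3)$ remove half-$3$-balls near the boundary of the solid torus and do not alter $\pi_1$, so the left summand has $\pi_1\cong\Z$ generated by the core circle $\gamma_1$ of $X'_1\cap\boundary\cong D_1\times S^1$; attaching the $3$-dimensional $1$-handle $\overline{\nu(C_1)}\cap\boundary\cong I\times D_-$ introduces a second generator $\gamma_2$, namely the loop formed by the handle's core $I\times\{pt\}$ closed up with a return arc in $X'_1\cap\boundary$. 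This gives $\pi_1(\newtilde X_1\cap\boundary)\cong F_2$.

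Both $\gamma_1$ and $\gamma_2$ can be isotoped into $\newtilde X_1\cap\newtilde X_2\cap\boundary$. The circle $\gamma_1$ is isotopic within $D_1\times S^1$ onto one of the two $X_2$-annuli of $\partial(X'_1\cap\boundary)$, which persists inside the page after stabilization. For $\gamma_2$, choose $pt\in B_1\cap D_-$ so that the handle's core $I\times\{pt\}$ is isotopic within the $1$-handle onto the rectangle $I\times(B_1\cap\boundary D_-)$ --- which is exactly the strip added to $\newtilde X_1\cap\newtilde X_2\cap\boundary$ by the stabilization along $C_1$; the return arc closing this up into a loop can then be chosen within $\newtilde X_1\cap\newtilde X_2\cap\boundary$ because this page is connected after all three stabilizations. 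The hardest step, and essentially the entire content of the proof, is identifying this connectivity of the page as sufficient: it is precisely the purpose of performing the three boundary-stabilizations in the first place. With this in hand, the characterization of trivial compression bodies yields both products and their agreement.
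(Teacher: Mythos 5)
Your approach is genuinely different from the paper's. The paper gives a direct, constructive verification: it tracks an explicit product structure $X_1'\cap\boundary\cong S^1\times I\times I$ (with the two pages as $S^1\times I\times\{0,1\}$) through each boundary-stabilization, observing that the $X'_2$- and $X'_3$-stabilizations carve the two annular page-pieces into a single annulus each while preserving the longitudinal product structure on the solid torus, and that the $X'_1$-stabilization extends the product across the attached tube by adding $(\text{band})\times I$. This makes the agreement of the two product structures visibly manifest rather than asserted. Your route instead deduces the product structure abstractly from a $\pi_1$-isomorphism criterion, which is a more conceptual and potentially shorter argument once the criterion is in hand; what it buys is that the explicit geometric tracking is replaced by a free-group computation, and what it costs is that the result becomes dependent on a nontrivial classification theorem.

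That dependence is where a gap remains. The statement you invoke --- that for a compact orientable irreducible $3$-manifold $H$ with $\boundary H$ split into $\boundary_-H\cup\boundary_+H$ along a $1$-manifold, $\pi_1(\boundary_-H)\hookrightarrow\pi_1(H)$ being an isomorphism forces $H\cong\boundary_-H\times I$ compatibly with the given boundary decomposition --- is not an off-the-shelf lemma. It follows from Waldhausen-type results (push $\boundary_-H$ into the interior to get a properly embedded, two-sided incompressible surface; the $\pi_1$-isomorphism then forces $H$ to be an $I$-bundle over that surface, and the twisted $I$-bundle case is excluded because it would make the inclusion have image of index two), but you should either cite a precise reference or supply this derivation, including why the resulting $I$-bundle structure respects the given $\boundary_\pm$ partition (which here uses that $\boundary_+H$ is connected and shares all of its boundary with $\boundary_-H$). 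Two smaller points: the ``same compression body'' justification for the agreement of the two product structures should be spelled out as the observation that $H\cong F\times I$ with $\boundary_-H=F\times 0$ automatically exhibits $\boundary_+H=F\times 1\cup\boundary F\times I$ as the other end of the \emph{same} product; and the surjectivity argument for $\gamma_2$ should note that any two choices of return arc in $X'_1\cap\boundary$ differ by a power of $\gamma_1$, so the freedom in choosing the return arc inside the connected page is harmless.
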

 
 \begin{proof}
 See Figure \ref{fig:pageinboundary} for an illustration of this proof.

Before any boundary-stabilizations, $X'_1\cap\boundary$ is a solid torus. $X'_1\cap X'_2\cap\boundary$ and $X'_1\cap X'_3\cap\boundary$ are each disjoint unions of two annuli. The core of each annulus is a longitude of the solid torus $X'_1\cap\boundary$. 
 
From the discussion so far of Section \ref{sec:rp2subsection}, recall that after performing the $X'_3$-boundary stabilization we have
\begin{align*}
X'_1\cap\boundary&\cong S^1\times B^2,\\
X'_1\cap X'_2\cap\boundary&\cong D^2\sqcup (S^1\times I),\\
X'_1\cap X'_3\cap\boundary&\cong \sqcup_2 (S^1\times I)\cup(\text{band between the two components})\cong\Sigma_{0,3}.
\end{align*}
Similarly, after further performing the $X'_2$-boundary stabilization we have
\begin{align*}
X'_1\cap\boundary&\cong S^1\times B^2,\\
X'_1\cap X'_2\cap\boundary&\cong D^2\sqcup (S^1\times I)\cup(\text{band between the two components})\\&\cong S^1\times I,\\
X'_1\cap X'_3\cap\boundary&\cong \Sigma_{0,3}\setminus\nu(\text{arc between two boundary components})\cong S^1\times I.
\end{align*}

Thus, after performing the $X'_2$ and $X'_3$ boundary-stabilizations, $X'_1\cap\boundary$ is a solid torus and each of $X'_1\cap X'_2\cap\boundary$ and $X'_1\cap X'_3\cap\boundary$ is an annulus on $\boundary (X'_1\cap\boundary)$ whose core is a longitude of $X'_1\cap\boundary$. At this point, we have a product structure $X'_1\cap\boundary \cong (S^1\times I\times I)$, where $X'_1\cap X'_2\cap\boundary=S^1\times I\times 0$ and $X'_1\cap X'_3\cap\boundary=S^1\times I\times 1$.

The $X'_1$ boundary-stabilization increases the genus of $X'_1\cap\boundary$. On the solid tube added to $X'_1\cap\boundary $ to obtain $\newtilde{X}_1\cap\boundary$, there is a band on the boundary (parallel to the core of the tube) contained in $\widetilde{X}_1\cap \widetilde{X}_2\cap\boundary$ and another band on the boundary (parallel to the core of the tube) contained in $\widetilde{X}_1\cap \widetilde{X}_3\cap\boundary$. The effect on the product structure of $X'_1\cap\boundary$ is to add a product tube (band)$\times I$.\end{proof}

The above claim holds similarly for $\newtilde{X}_2\cap\boundary$ and $\newtilde{X}_3\cap\boundary$, exchanging the roles of $\newtilde{X}_1,\newtilde{X}_2,$ and $\newtilde{X}_3$. Thus, $(\newtilde{X}_1,\newtilde{X}_2,\newtilde{X_3})$ induces an open-book structure on $\boundary(X^4\setminus \nu(S))$, so  $(\newtilde{X}_1,\newtilde{X}_2,\newtilde{X_3})$  is a relative trisection of $X^4\setminus\nu(S)$.

\begin{figure}
\includegraphics[width=\textwidth]{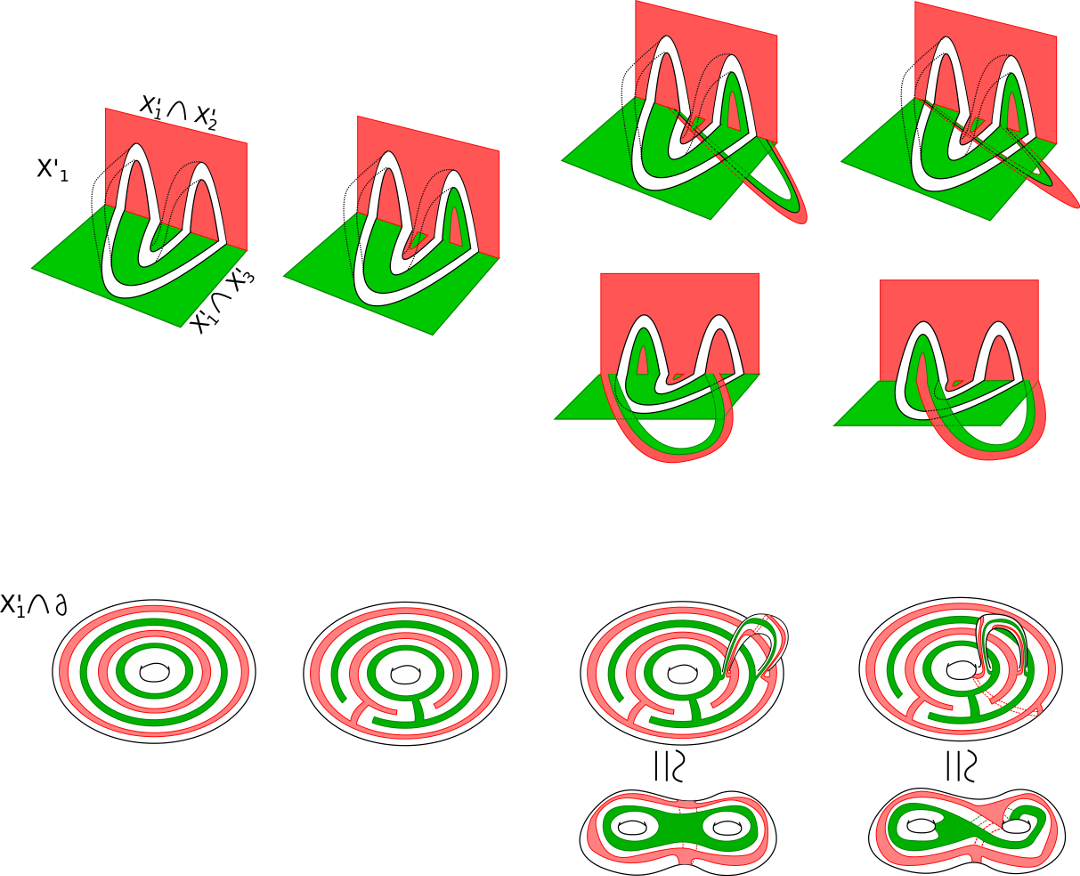}
\caption{Top left: A schematic of $X'_1$. To the right, we draw $X'_1$ after boundary-stabilizing $X'_2$ and $X'_3$. Top two, third column: two perspectives of $\newtilde{X}_1$ after boundary-stabilizing $X'_1$. Top two, rightmost: two perspectives of $\newtilde{X}_1$ after a boundary-stabilizing $X'_1$ (with a different choice of $C_1$). Bottom left: Before boundary-stabilizing, $X'_1\cap\boundary$ is a solid torus. On its boundary, $X'_1\cap X'_2\cap\boundary$ and $X'_1\cap X'_3\cap\boundary$ are each two longitudinal annuli. Bottom left, second picture: $X'_1\cap\boundary$ after boundary-stabilizing $X'_2$ and $X'_3$. Bottom left, two rightmost pictures: $\newtilde{X}_1\cap\boundary$ after boundary-stabilizing $X'_1$, for two different choices of $C_1$ (corresponding to the images in the top row).}
\label{fig:pageinboundary}
\end{figure}

In conclusion, we now show how to find a relative trisection diagram $(\Sigma',\alpha',\beta',\gamma')$ of this relative trisection. See Figure \ref{fig:triplanediagram} for several simple examples. We start with a shadow diagram $(\Sigma,\alpha,\beta,\gamma, s_\alpha,s_\beta,s_\gamma)$ of $S$ (recall Definition \ref{shadowdef}). Stabilize $(\Sigma,\alpha,\beta,\gamma)$ as necessary so that we may take $s_\alpha,s_\beta$, and $s_\gamma$ each to be two disjoint arcs. Identify $\Sigma$ with $X_1\cap X_2\cap X_3$. The arcs $C_1$, $C_2$, and $C_3$ are parallel to one arc of $s_\beta,s_\gamma,s_\alpha$, respectively (with correct framing, since we assumed that $C_k$ does not twist around $S\cap X_i\cap X_j$ in $X_i\cap X_j$). 

Obtain $\Sigma'$ from $\Sigma$ by deleting an open neighborhood of $\boundary s_*$ and attaching an orientation-preserving band for each $C_i$, with endpoints around $\boundary C_i$. The core of $C_3$ and its shadow (an arc in $s_\alpha$) together bound a disk in $\newtilde{X}_1\cap \newtilde{X}_2$, giving an $\alpha'$ curve (shadow of $C_3)\cup($core of band corresponding to $C_3$). The other $\alpha'$ curve encircles the shadow of $C_3$ in $\Sigma$. Similar holds for the $\beta'$ and $\gamma'$ curves. See Figure \ref{fig:triplanediagram} for several small examples of relatively trisecting $X^4\setminus\nu(S)$.

\begin{figure}
\includegraphics[width=\textwidth]{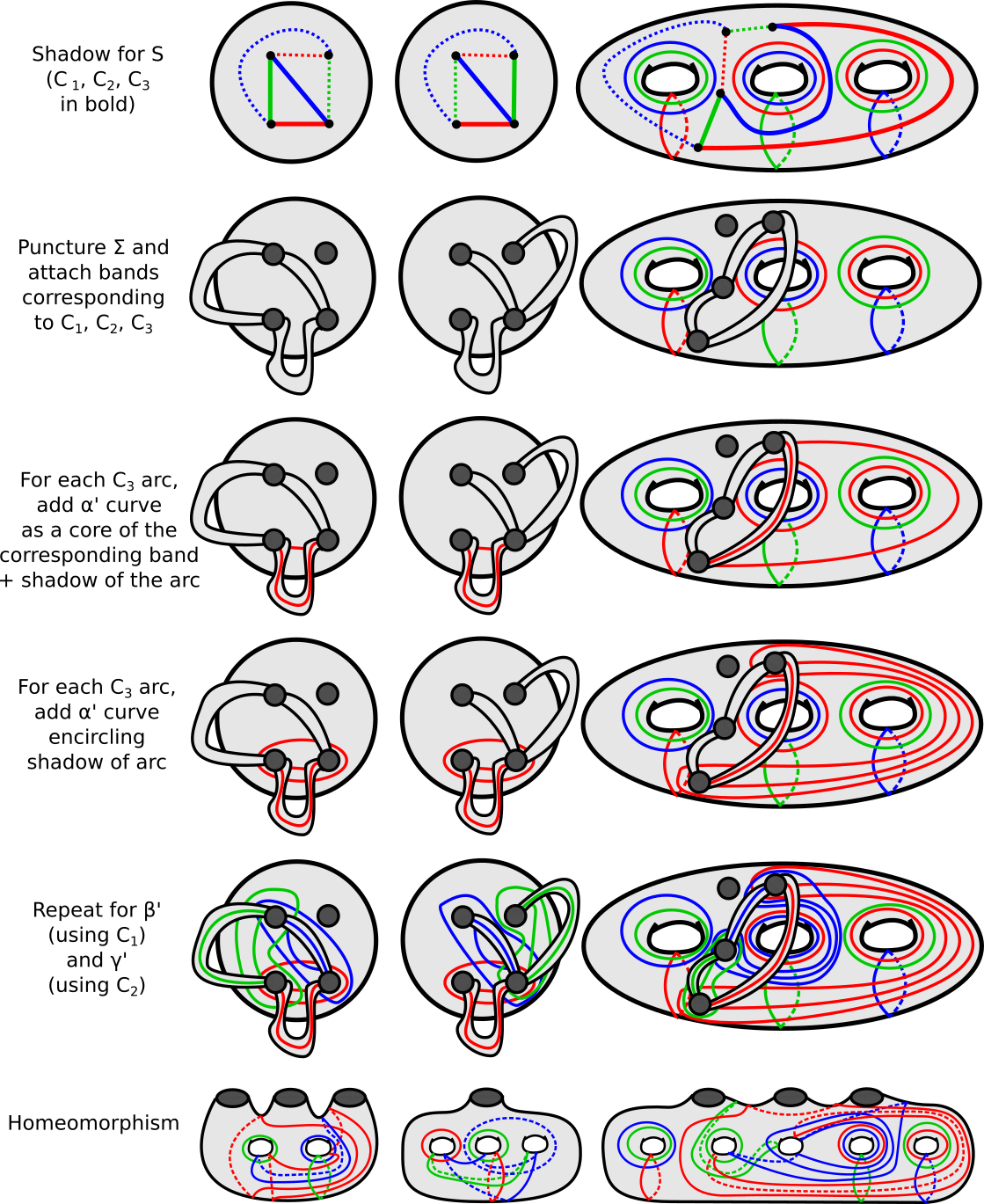}
\caption{Illustration of the process for finding a relative trisection of $X^4\setminus\nu(S)$, where $S\cong\RP^2$. In the top row, we draw possible shadow diagrams for $S$.  Down each column, we show the process in finding the relative trisection, where the bold shadows are parallel to the chosen arcs $C_1$, $C_2$, and $C_3$.  The final relative trisection is in the 5th row; we give an equivalent diagram (related by a surface automorphism) in the 6th row.  
Left to right, the final relative trisections have $(g,k,p,b)$ respectively equal to $(2,2,0,3)$,  $(3,2,1,1)$, $(5,3,0,3)$.}
\label{fig:triplanediagram}
\end{figure}

To obtain a relative trisection of $X^4\setminus \nu(S)$ with $p=0,b=3$ (which will be desired in Section \ref{sec:gettrisection}), we fix one intersection of $X_1\cap X_2\cap X_3\cap S$ and choose the boundary-stabilizations to never meet the corresponding boundary component of $X'_1\cap X'_2\cap X'_3$. This ensures that the resulting triple-intersection of the relative trisection on $X^4\setminus \nu(S)$ has three boundary components.

\subsection{Trisecting complements of arbitrary surfaces}\label{sec:general}
In this section, we trisect the complement of an arbitrary surface $S$ in $X^4$. The construction is similar to $\RP^2$ case. 
 Many indices are included for the very-interested reader. Averagely-interested readers may ignore these numbers.

Let $S\subset X^4$ be a connected surface with $\chi(S)=\chi$, in a $(g,k)$-trisected $4$-manifold $X^4=(X_1,X_2,X_3)$. Isotope $S$ so that $(X_1,X_2,X_3)$ induces a $(c,b)=((\chi+b)/3,b)$-trisection of $S$. By \cite{newsurfacetrisections}, we can stabilize each $X_i$ $((\chi+b)/3-1)$ times so that $(\chi+b)/3=c=1$ and $b=3-\chi$ (this increases $g$ and $k$). Delete a tubular neighborhood of $S$; let $X'_i:=X_i\setminus\nu(S)$. 

As in Subsection \ref{sec:rp2subsection}, take $C_1,C_2,C_3$ to be collections of $2-
\chi$ disjoint arcs with endpoints on $X'_1\cap  X'_2\cap  X'_3$, with $C_i\subset X'_j\cap X'_k\cap\boundary$. Take each arc in $C_i$ to be parallel to a distinct arc in $\nu(S)\cap X_j\cap X_k$; there is exactly one arc in $S\cap X_j\cap X_k$ which is not parallel to any arc in $C_i$. Moreover, take each arc of $C_i$ to twist zero times around the $3$-dimensional tubes of $\nu(S)\cap (X_j\cap  X_k)$ (i.e. we ask $S\cap(X_j\cap  X_k)$ cobounds disks $D$ in $X_j\cap  X_k$ with arcs in $X_i\cap  X_j\cap  X_k$ so that $C_i\cap D=\emptyset$). Finally, take $C_1,C_2,$ and $C_3$ to have disjoint endpoints.

Let $(\newtilde{X}_1,\newtilde{X}_2,\newtilde{X}_3)$ be the result of boundary-stabilizing $(X'_1, X'_2, X'_3)$ along every arc in $C_1\cup C_2\cup C_3$ (i.e. boundary-stabilizing $X'_1$ along $C_1$, $X'_2$ along $C_2$, and $X'_3$ along $C_3$).

\begin{proposition}
$(\newtilde{X}_1,\newtilde{X}_2,\newtilde{X}_3)$ is a relative trisection of $X^4\setminus\nu(S)$.
\end{proposition}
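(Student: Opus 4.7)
My plan is to mirror the case-by-case analysis already carried out for $\RP^2$ in Section \ref{sec:rp2subsection}, now keeping track of the $b=3-\chi$ bridges instead of $2$. After the initial stabilizations bringing $S$ into $(1,3-\chi)$-bridge position, each $X_i' = X_i\setminus\nu(S)$ is diffeomorphic to $\natural_{k'} S^1\times B^3$ for some $k'$ depending on $g,k,$ and $\chi$, each double intersection $X_i'\cap X_j'$ is a $3$-dimensional handlebody, and the triple intersection $X_1'\cap X_2'\cap X_3'$ is a connected genus-$g'$ surface with $b+1=4-\chi$ boundary components (the bridges of $S$ together with the ``capped" page components described in Section \ref{sec:complement}). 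I would first record these baseline topological facts explicitly, then read off the effect of each individual boundary-stabilization along an arc of $C_i$ exactly as in the bulleted paragraphs of Section \ref{sec:rp2subsection}: carving $\nu(C)$ out of $X'_j,X'_k$ and the two double intersections containing them leaves their diffeomorphism types unchanged, while adjoining $\overline{\nu(C)}$ to $X'_i$ and the two double intersections containing $X'_i$ each contributes a single $S^1\times B^3$ (resp. $S^1\times B^2$) boundary-summand. The triple intersection acquires a $2$-dimensional band.

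Next, I would verify the correct counts. Since $|C_1\cup C_2\cup C_3| = 3(2-\chi)$, each $\newtilde{X}_i$ picks up exactly $2-\chi$ new $S^1\times B^3$ summands, and each $\newtilde{X}_i\cap\newtilde{X}_j$ picks up $2(2-\chi)$ new $S^1\times B^2$ summands. For the triple intersection, the crucial combinatorial point (matching the setup of the $C_i$ as arcs parallel to all but one of the $S\cap X_j\cap X_k$ arcs, meeting distinct boundary components of $X_1'\cap X_2'\cap X_3'$) is that each band attachment merges two boundary circles into one, so after attaching $3(2-\chi)$ bands to $\Sigma_{g',4-\chi}$ we obtain an orientable surface whose Euler characteristic has decreased by $3(2-\chi)$ and whose number of boundary components has decreased by the same amount. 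A bookkeeping check then shows this surface matches the surface obtained from $X_1'\cap X_2'\cap X_3'$ by the prescribed attachments, so the triple intersection is a $\Sigma_{\newtilde{g}}^{\newtilde{b}}$ of the expected genus and number of boundary components. The numbers $(g,k,p,b)$ for the resulting trisection can then be read off.

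The main obstacle, as in the $\RP^2$ case, is verifying that the decomposition induces an open book on $\partial(X^4\setminus\nu(S))$. I would establish this by proving the analog of the Claim in Section \ref{sec:rp2subsection}, namely that $\newtilde{X}_i\cap\partial(X^4\setminus\nu(S))$ is a product over both $\newtilde{X}_i\cap\newtilde{X}_j\cap\partial$ and $\newtilde{X}_i\cap\newtilde{X}_k\cap\partial$, and that the two product structures agree. Before any stabilization, $X_i'\cap\partial$ is a solid torus (the meridional $S^1\times D^2$ around one bridge component) and the two intersections $X_i'\cap X_j'\cap\partial$ sit as longitudinal annuli; more precisely, with $b$ bridges one has $b$ meridional solid tori and $2b$ longitudinal annuli. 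The $X_j'$- and $X_k'$-stabilizations (along arcs in $C_j$ and $C_k$, which lie in $X_j'\cap X_k'\cap\partial$, i.e.\ on the pages opposite $X_i'$) each drill out a cocore of one annular page, fusing two annuli of $X_i'\cap X_j'\cap\partial$ (respectively $X_i'\cap X_k'\cap\partial$) into one. Iterating this along all $2-\chi$ relevant arcs leaves $\newtilde{X}_i\cap\newtilde{X}_j\cap\partial$ and $\newtilde{X}_i\cap\newtilde{X}_k\cap\partial$ each as a single connected surface sitting as ``top" and ``bottom" of a product structure on $X_i'\cap\partial$ after these two stabilizations. Finally, each arc in $C_i$ lies in $X_j'\cap X_k'\cap\partial$ and the $X_i'$-stabilization adds a product tube to $X_i'\cap\partial$ compatible with the existing product structure (adding a band to each of $\newtilde{X}_i\cap\newtilde{X}_j\cap\partial$ and $\newtilde{X}_i\cap\newtilde{X}_k\cap\partial$). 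Running this argument cyclically over $i=1,2,3$ produces an honest open book on $\partial(X^4\setminus\nu(S))$, which together with the handle-theoretic verifications above gives the relative trisection.
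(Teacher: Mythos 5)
Your proposal follows the same route as the paper's own proof: check that each $\newtilde{X}_i$ and $\newtilde{X}_i\cap\newtilde{X}_j$ is a handlebody of the right dimension, that the triple intersection is a connected orientable surface with boundary, and then reduce the open-book requirement to the same product-structure claim for $\newtilde{X}_i\cap\partial(X^4\setminus\nu(S))$ that the paper proves. That overall strategy matches Section \ref{sec:general}.

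Several of the supporting counts are off, though. The triple intersection $X'_1\cap X'_2\cap X'_3$ is $\Sigma$ with a disk removed about each of the $2b = 6-2\chi$ bridge endpoints, so it is $\Sigma_{g,6-2\chi}$, not a surface with $b+1 = 4-\chi$ boundary components; for $\RP^2$ that is $4$ holes, not $3$. Your assertion that every band attachment necessarily merges two distinct boundary circles is also too strong: the paper explicitly notes that later arcs of $C_i$ may meet only one boundary component of the triple intersection once earlier bands have been attached, so the final genus and boundary count depend on the choice of arcs. This is harmless for the proposition -- the definition only requires a connected orientable surface with nonempty boundary -- but the uniform formula you state is not correct. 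In the product-structure argument you place $C_j$ and $C_k$ in $X'_j\cap X'_k\cap\partial$, but that is where $C_i$ lives; in fact $C_j\subset X'_i\cap X'_k\cap\partial$ and $C_k\subset X'_i\cap X'_j\cap\partial$, so the $X'_j$-stabilization carves cocores out of $X'_i\cap X'_k\cap\partial$ while simultaneously adding bands to $X'_i\cap X'_j\cap\partial$, and vice versa for $X'_k$; your sketch attributes both effects to the wrong pages. Finally, the paper's proof of the claim uses that $S\cap X_i$ is a single disk to guarantee that the bands added to each page join distinct components (so no genus is created on the pages); you should surface that hypothesis, since otherwise the conclusion that the pages remain annuli would not follow. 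With these corrections your argument coincides with the paper's.
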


\begin{proof}
Recall $X'_i$ is a $4$-dimensional handlebody, and $\newtilde{X}_i$ is obtained from $X'_i$ by attaching $1$-handles. Therefore, $\newtilde{X}_i$ is a $4$-dimensional handlebody. In fact, $\newtilde{X}_i= X'_i\natural_{(2-\chi)}(S^1\times B^3)\cong\natural_{k+3-\chi} S^1\times B^3$. 

Moreover, $\newtilde{X}_i\cap\newtilde{X}_j$ is formed by attaching $3$-dimensional $1$-handles to the $3$-dimensional handlebody $X'_i\cap X'_j$, so $\newtilde{X}_i\cap\newtilde{X}_j$ is a $3$-dimensional handlebody. In fact,
$\newtilde{X}_i\cap\newtilde{X}_j=(X'_i\cap X'_j)\natural_{2(2-\chi)}(S^1\times D^2)\cong\natural_{g+7-3\chi} S^1\times D^2$.

Furthermore, the triple intersection
$\newtilde{X}_1\cap\newtilde{X}_2\cap\newtilde{X}_3$ is formed from $X'_1\cap X'_2\cap X'_3$ by attaching $3(2-\chi)$ orientation-preserving bands. Therefore, $\newtilde{X}_1\cap\newtilde{X}_2\cap\newtilde{X}_3$ 
is a connected, orientable surface of Euler characteristic $2-2g-2(3-\chi)-3(2-\chi)=-10-2g+5\chi$. The genus and number of boundary components of this triple intersection depends on the choice of $C_1$, $C_2$, and $C_3$. 
To finish the proof, we need to show that the proposed relative trisection induces an open book decomposition on the boundary.

\begin{claim}
 $\newtilde{X}_1\cap\boundary(X^4\setminus \nu(S))$ is a product over $\newtilde{X}_1\cap \newtilde{X}_2\cap\boundary(\newtilde{X}^4\setminus \nu(S))$ and over $\newtilde{X}_1\cap \newtilde{X}_3\cap\boundary (\newtilde{X}^4\setminus\nu(S))$. Moreover, these product structures agree.
\end{claim}

\begin{proof}
The proof is virtually the same as in Section \ref{sec:rp2subsection}; See Figure \ref{fig:pageinboundary}. 
Note $X'_1\cap X'_2\cap\boundary(X^4\setminus\nu(S))$ consists of $3-\chi$ longitudinal annuli on the solid torus $X'_1\cap\boundary(X^4\setminus\nu(S))$. After boundary-stabilizing $X'_3$ a total of $2-\chi$ times along $C_3$,
\begin{align*}
 X'_1\cap\boundary&\cong S^1\times B^2,\\
 X'_1\cap  X'_2\cap\boundary&\cong \sqcup_{2-\chi}D^2\sqcup (S^1\times I),\\
 X'_1\cap  X'_3\cap\boundary&\cong \sqcup_{3-\chi} (S^1\times I)\cup(\text{$2-\chi$ bands})\cong\Sigma_{0,4-\chi}.
\end{align*}

In the above, we implicitly use the fact that the intersection of $S$ with $X_1$ is a single disk. This ensures that attaching bands to $X'_1\cap  X'_3\cap\boundary$ parallel to arcs $C_3$ in $X'_1\cap X'_2\cap\boundary$ does not increase the genus of $X'_1\cap X'_3\cap\boundary$, as each successive band must join two distinct components.

After further boundary-stabilizing $X'_2$ $2-\chi$ times along $C_2$, we have
\begin{align*}
 X'_1\cap\boundary&\cong S^1\times B^2,\\
 X'_1\cap  X'_2\cap\boundary&\cong \sqcup_{2-\chi}D^2\sqcup (S^1\times I)\cup(\text{$2-\chi$ bands})\cong S^1\times I,\\
 X'_1\cap  X'_3\cap\boundary&\cong \Sigma_{0,4-\chi}\setminus\nu(\text{$2-\chi$ arcs})\cong S^1\times I.
\end{align*}

In the above, we implicitly use the fact that the intersection of $S$ with $X_1$ is a single disk. This ensures that attaching bands to $ X'_1\cap  X'_2\cap\boundary$ parallel to arcs $C_2$ in $ X'_1\cap  X'_3\cap\boundary$ does not increase the genus of $ X'_1\cap  X'_2\cap\boundary$, as each successive band must joint two distinct components. We also use the fact that the arcs $C_2$ meet $2(2-\chi)-2$ distinct boundary components of $ X'_1\cap X'_2\cap X'_3$. Each successive deletion from $ X'_1\cap  X'_3\cap\boundary$ must decrease the number of boundary components.

Thus, after performing the $ X'_2$ and $ X'_3$ boundary-stabilizations, $ X'_1\cap\boundary$ is a solid torus and each of $ X'_1\cap  X'_2\cap\boundary$ and $ X'_1\cap  X'_3\cap\boundary$ is an annulus on $\boundary ( X'_1\cap\boundary)$ whose core is a longitude of $ X'_1\cap\boundary$. At this point, we have a product structure $ X'_1\cap\boundary \cong (S^1\times I\times I)$, where $ X'_1\cap  X'_2\cap\boundary=S^1\times I\times 0$ and $ X'_1\cap  X'_3\cap\boundary=S^1\times I\times 1$.

The $ X'_1$ boundary-stabilizations along $C_1$ increases the genus of $ X'_1\cap\boundary$ by $2-\chi$, yielding $\newtilde{X}_1\cap\boundary \cong\natural_{3-\chi} S^1\times D^2$. On each of the $2-\chi$ solid tubes added to $ X'_1$ to form $\newtilde{X}_1$, there is a band on the boundary (parallel to the core of the tube) contained in $\newtilde{X}_1\cap \newtilde{X}_2\cap\boundary$ and another band on the boundary (parallel to the core of the tube) contained in $\newtilde{X}_1\cap \newtilde{X}_3\cap\boundary$. The effect on the product structure of $ X'_1\cap\boundary$ is to add $(2-\chi)$ product tubes of the form (band)$\times I$.

\end{proof}

The above claim holds similarly for $\newtilde{X}_2\cap\boundary$ and $\newtilde{X}_3\cap\boundary$, interchanging the roles of $\newtilde{X}_1, \newtilde{X}_2$ and $\newtilde{X}_3$. 
Thus, $(\newtilde{X}_1,\newtilde{X}_2,\newtilde{X}_3)$ is a relative trisection for $X^4\setminus \nu(S)$.
\end{proof}

Now we will explicitly trisect the complement of a specific surface $S$. We start from a shadow diagram $(\Sigma,\alpha,\beta,\gamma,s_\alpha,s_\beta,s_\gamma)$ for $S$. We stabilize $X_1$, $X_2$, and $X_3$ until each of $s_\alpha,s_\beta,s_\gamma$ consists of $3-\chi$ arcs, as in \cite{newsurfacetrisections}. Identify $\Sigma$ with $X_1\cap X_2\cap X_3$. Now choose $2-\chi$ distinct components of $s_\alpha,s_\beta, s_\gamma$ (each) to be (parallel to) $C_3$, $C_1$, and $C_2$ respectively. We obtain a relative trisection diagram $(\Sigma',\alpha',\beta',\gamma')$ for $(\newtilde{X}_1,\newtilde{X}_2,\newtilde{X}_3)$ by doing the following:

\begin{itemize}
\item Delete open neighborhoods of the endpoints of $s_*$ from $\Sigma$. Attach an orientation-preserving band for each component of $C_1, C_2$, and $C_3$, with endpoints of the band around endpoints of the arc component. Call the resulting surface $\Sigma'$.
\item Take $\alpha\subset \alpha',\beta\subset\beta',\gamma\subset\gamma'$.
\item For each arc in $C_3$, obtain an $\alpha'$ curve (shadow of $C_3)\cup($core of band corresponding to $C_3)$. Similarly obtain a $\beta'$ and $\gamma'$ curve for each component of $C_1$ and $C_2$, respectively.
\item For each arc in $C_3$, obtain an $\alpha'$ curve which encircles the shadow of $C_3$. Similarly obtain $\beta'$ and $\gamma'$ curves by encircling the shadows of $C_1$ and $C_2$, respectively.
\end{itemize}

This yields $g+4-2\chi(S)$ linearly independent $\alpha'$ curves on $\Sigma'$. As in Subsection \ref{sec:rp2subsection}, each $\alpha'$ curve bounds a disk in $\newtilde{X}_1\cap\newtilde{X}_2$. Moreover, we note $\chi(\Sigma')=2-2g-2(3\chi)-3(2-\chi)=-10-2g+5\chi$, so $\chi(\Sigma'\times I)\cong\partial_{11+2g-5\chi} (S^1\times B^3)$. Recall $\newtilde{X}_1\cap\newtilde{X}_2\cong\natural_{g+(3-\chi(S))+2(2-\chi(S))}(S^1\times B^2)\cong\natural_{g+7-3\chi(S)}(S^1\times B^2)$. Therefore, in a relative trisection diagram for $(\newtilde{X}_1,\newtilde{X}_2,\newtilde{X}_3)$ there are $g+4-2\chi$ distinct $\alpha'$ curves altogether. Thus, we have listed the complete set of $\alpha'$ curves (and similarly $\beta'$ and $\gamma'$ curves). 

In Figure \ref{fig:torusexample}, we consider $S=\text{(spun trefoil)}\#\text{(unknotted torus)}\subset S^4$ (a triplane diagram can be obtained by connect-summing diagrams from \cite{surfacetrisections}; we convert this into a shadow diagram). The torus $S$ can be isotoped so that the standard $(0,0)$-trisection $(X_1,X_2,X_3)$ of $S^4$ induces a $(2,6)$-bridge trisection of $S$. We stabilize each $X_i$ once along an arc in $X_j\cap X_k\cap S$ to find a $(3,1)$-trisection of $S^4$ inducing a $(1,3)$-bridge trisection of $S$; the shadow diagram $(\Sigma,\alpha,\beta, \gamma, s_\alpha, s_\beta, s_\gamma)$ of Figure \ref{fig:torusexample} illustrates this bridge trisection. To obtain a relative trisection of $S^4\setminus\nu(S)$, we choose two arcs in each $X_j\cap X_k\cap S$ be parallel to $C_i\subset\boundary\nu(S^4\setminus\nu(S))$. In Figure \ref{fig:torusexample}, we indicate shadows of $C_i$ in $s_*$. We then delete $\nu(S)$ and boundary stabilize each $X'_i:=X_i\setminus\nu(S)$ (twice) along $C_i$ to obtain a relative trisection $(\newtilde{X}_1,\newtilde{X}_2,\newtilde{X}_3)$ of $S^4\setminus\nu(S)$.

To obtain the relative trisection diagram $(\Sigma',\alpha',\beta',\gamma')$ of $(\newtilde{X}_1,\newtilde{X}_2,\newtilde{X}_3)$ pictured in Figure \ref{fig:torusexample} (bottom), we remove open neighborhoods of $\boundary s_*$ from $\Sigma$ and attach six bands with ends at the the boundary of $C_1, C_2, C_3$. Then we include two $\alpha'$ curves for each arc in $C_3$: one curve is the shadow of $C_3$ plus a core of the corresponding band, while one curve encircles the shadow of the arc in $C_3$. We similarly add four $\beta'$ and $\gamma'$ curves (each) corresponding to $C_1$ and $C_2$, respectively. In recap, there are seven total $\alpha'$ curves, given by:

\begin{itemize}
\item The $\alpha$ curves in the trisection $(X_1,X_2,X_3)$ of $S^4$. In this example, $(X_1,X_2,X_3)$ is a $(3,1)$-stabilization, so there are three such $\alpha'$ curves.
\item Curves $a\cup a'$, where $a$ is a shadow of an arc in $C_3$ and $a'$ is a core of the band corresponding to that component of $C_3$. There are two such $\alpha'$ curves in this example.
\item Curves encircling the shadows of $C_3$. In this example there are two such $\alpha'$ curves.
\end{itemize}

The seven $\beta',\gamma'$ curves are similarly related to $\beta, C_1$ and $\gamma, C_2$, respectively.

\begin{figure}
\includegraphics[width=\textwidth]{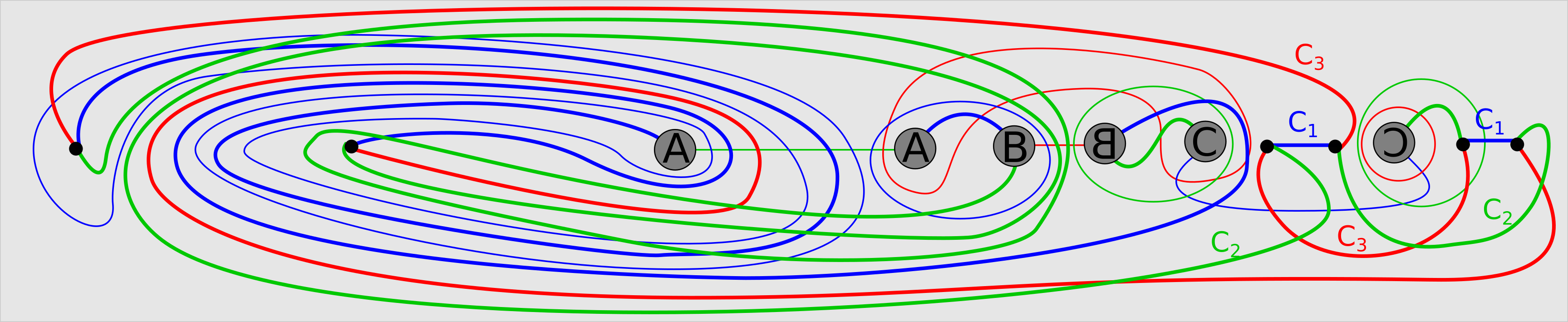}\\
\vspace{.1in}
\includegraphics[width=\textwidth]{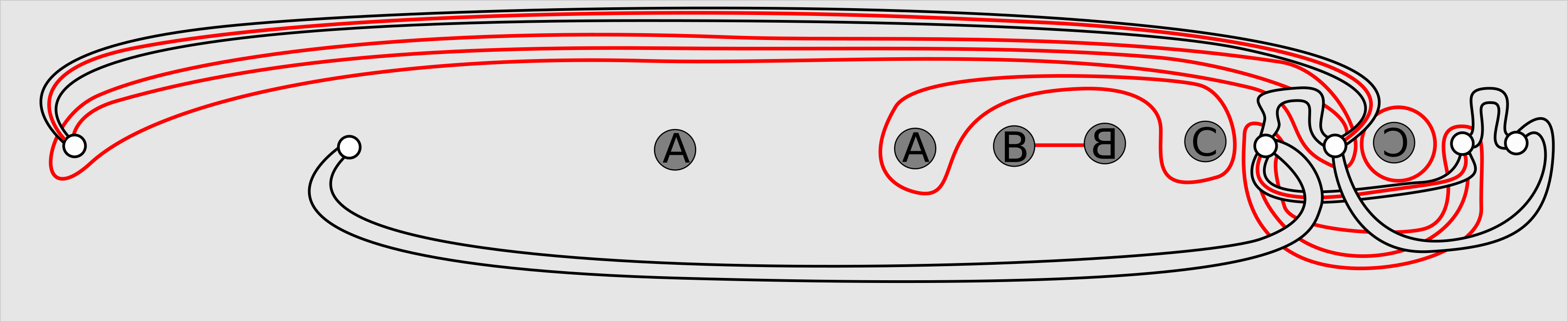}\\
\vspace{.1in}
\includegraphics[width=\textwidth]{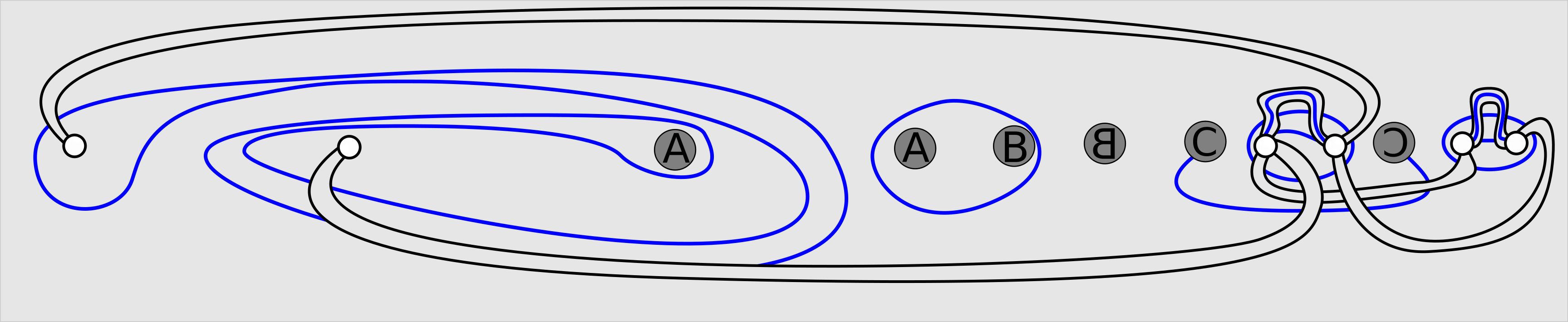}\\
\vspace{.1in}
\includegraphics[width=\textwidth]{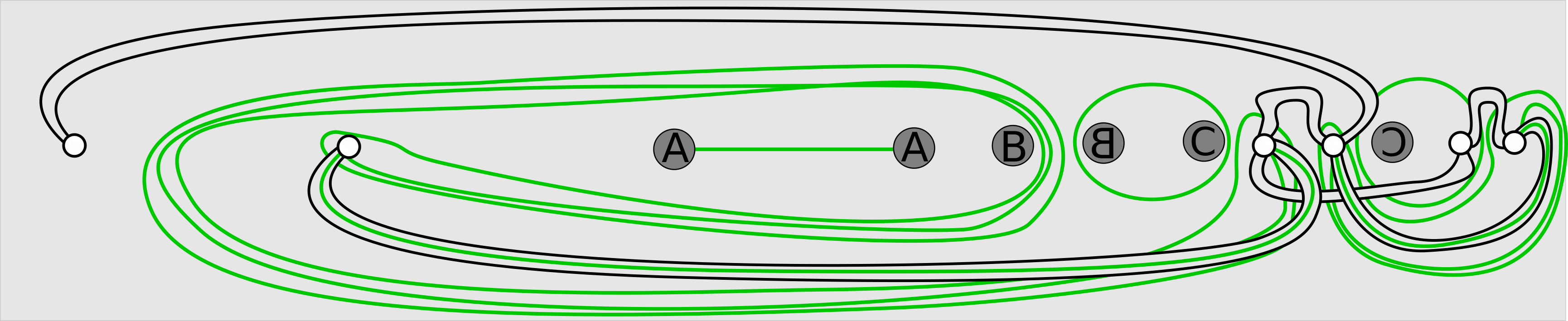}\\
\vspace{.1in}
\includegraphics[width=\textwidth]{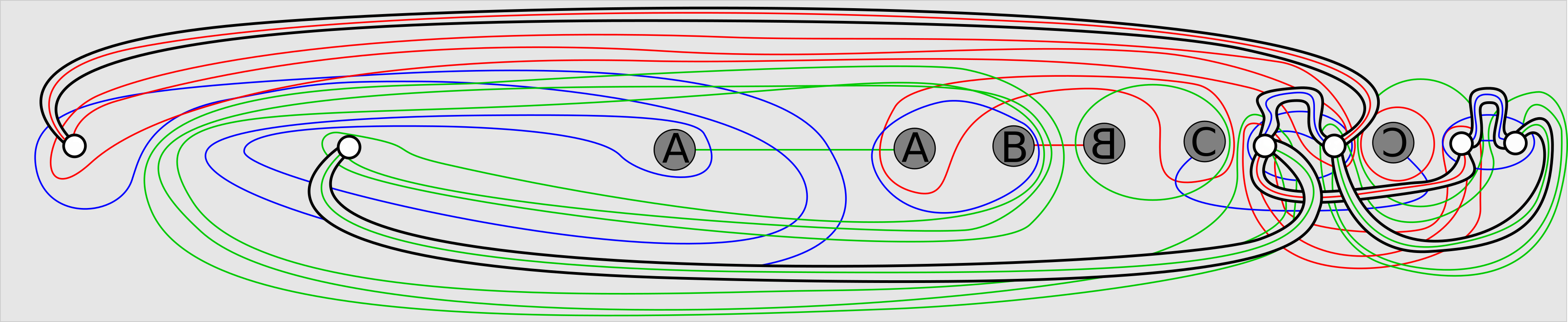}
\caption{Top: a shadow diagram for a $S=($spun trefoil$)\#($unknotted torus$)$ in $S^4$. Note $S$ is in $(1,3)$-bridge position. We indicate two arcs in each $s_*$ that comprise $C_1,C_2$, and $C_3$ in the construction of a relative trisection of $S^4\setminus\nu(S)$. We will obtain a diagram $(\Sigma',\alpha',\beta',\gamma')$ of this relative trisection. Second row: $\Sigma'$ and $\alpha'$. Third row: $\Sigma'$ and $\beta'$. Fourth row: $\Sigma'$ and $\gamma'$. Bottom: the relative trisection $(\Sigma',\alpha',\beta',\gamma')$. This relative trisection of $S^4\setminus\nu(S)$ has $(g,k,p,b)=(8,4,1,2)$.}
\label{fig:torusexample}
\end{figure}

\section{Gluing $P_{\pm}$ to $X^4\setminus\RP^2$.}\label{sec:gettrisection}
Let $S\subset X^4$ be an $\RP^2$ with Euler number $e(S)=\pm 2$. We have previously produced preferred $(g,k,p,b)=(2,2,0,3)$-trisections $T_1$, $T_2$ (or $\overline{T_1},\overline{T_2}$, if $e(S)=-2$) of $\nu(S)$ (see Section \ref{sec:preferredtrisections}), and can produce a $(g,k,0,3)$-trisection of $X^4\setminus\nu(S)$ via Section \ref{sec:rp2subsection}. The following easy lemma allows us to glue these trisections.

\begin{lemma}
Suppose $Q$ has an open book where the pages are $3$-punctured spheres. Then the monodromy of the open book consists of two Dehn twists around each boundary, not all of the same sign.
\end{lemma}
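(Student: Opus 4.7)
My plan is to combine the classification of the mapping class group of a pair of pants with a computation of $H_1(Q)$ via the open book. Any essential simple closed curve in $\Sigma_0^3$ is boundary-parallel, so $\mathrm{MCG}(\Sigma_0^3,\partial\Sigma_0^3)\cong\mathbb{Z}^3$, generated by the three boundary Dehn twists $T_1,T_2,T_3$. Thus the monodromy may be written as $\phi=T_1^{a_1}T_2^{a_2}T_3^{a_3}$ for integers $a_1,a_2,a_3$, and the task reduces to showing that $|a_i|=2$ for each $i$ and that the three $a_i$ do not all share a common sign.

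Next, I would identify the closed $3$-manifold $M_\phi$ built from this open book as a Seifert-fibered space over $S^2$, in which the three binding circles become fibers of multiplicities $|a_1|,|a_2|,|a_3|$ and orientation invariants $\beta_i=\mathrm{sgn}(a_i)$. If some $a_i=0$, the $i$-th binding would be a regular fiber, so $M_\phi$ would be a Seifert fibration over $S^2$ with at most two exceptional fibers, hence a lens space with cyclic fundamental group; but $\pi_1(Q)\cong Q_8$ is non-cyclic, so each $a_i\neq 0$. Since $Q$'s Seifert structure has three exceptional fibers each of multiplicity $2$, I conclude $|a_i|=2$ for each $i$.

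Finally, the standard presentation of $H_1$ for the Seifert-fibered space $M(0;(2,\beta_1),(2,\beta_2),(2,\beta_3))$ has presentation matrix
\[
\begin{pmatrix} 2 & 0 & 0 & \beta_1\\ 0 & 2 & 0 & \beta_2\\ 0 & 0 & 2 & \beta_3\\ 1 & 1 & 1 & 0\end{pmatrix},
\]
with determinant $-4(\beta_1+\beta_2+\beta_3)$. Since $H_1(Q)\cong(\mathbb{Z}/2)^2$ has order $4$, we need $|\beta_1+\beta_2+\beta_3|=1$; this forces the three signs $\mathrm{sgn}(a_i)$ not all to agree, since the all-positive and all-negative cases give $|\det|=12$ and even yield $H_1\cong\mathbb{Z}/2\oplus\mathbb{Z}/6$ rather than $(\mathbb{Z}/2)^2$.

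The main obstacle will be making precise the correspondence between the open-book monodromy coefficients $a_i$ and the Seifert invariants $(|a_i|,\mathrm{sgn}(a_i))$ at each binding, especially the assertion that the multiplicity of the $i$-th exceptional fiber equals $|a_i|$. A more hands-on alternative is to derive a Kirby diagram for a $4$-manifold bounding $Q$ via the recipe of Section~\ref{sec:getKirby}, read the presentation matrix of $H_1(Q)$ off the resulting linking form, and finish with the same case analysis on $(a_1,a_2,a_3)$.
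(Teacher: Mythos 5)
Your argument is correct and reaches the same conclusion, but by a genuinely different route from the paper's. Both proofs start from the same two facts: the mapping class group of $\Sigma_0^3$ (rel boundary) is $\Z^3$ generated by boundary Dehn twists, so the monodromy is $\phi=T_1^{a}T_2^{b}T_3^{c}$, and the resulting open book manifold is Seifert-fibered over $S^2$ with exceptional fiber multiplicities determined by $|a|,|b|,|c|$. From there the routes diverge. You use the classification of Seifert fibrations on $Q$ (namely that $Q$ admits a unique Seifert fibration over $S^2$, with three multiplicity-$2$ exceptional fibers) to conclude $|a|=|b|=|c|=2$. The paper instead works entirely inside the Seifert presentation of $\pi_1(Q)\cong Q_8$: the central element $h$ must be $\pm 1$; the case $h=1$ is ruled out by abelianizing; and when $h=-1$ the quotient $\pi_1(Q)/\langle -1\rangle\cong\Z/2\oplus\Z/2$ is a spherical triangle group, forcing multiplicities $(2,2,2)$. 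So you trade a known-but-nontrivial piece of $3$-manifold classification (uniqueness of Seifert structures for small Seifert-fibered spaces) for a piece of finite group theory; both are valid, and the paper's version is slightly more self-contained. The final step, computing $|H_1(Q)|=4$ to rule out the all-same-sign case, is essentially identical in both treatments (the paper writes it as $|ab+bc+ca|=4$, you as $|\det|=|4(\beta_1+\beta_2+\beta_3)|=4$).

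Two small corrections. First, your reduction to ``each $a_i\neq 0$'' should be ``each $|a_i|\geq 2$'': the value $|a_i|=1$ also yields a regular (multiplicity-$1$) fiber, hence a Seifert fibration over $S^2$ with at most two exceptional fibers, and the lens-space argument applies; you should exclude it explicitly. Second, the case $a_i=0$ is not quite covered by the lens-space argument as stated: filling a boundary torus of a Seifert-fibered piece along the fiber slope does not extend the Seifert fibration but instead produces a reducible $3$-manifold (or one whose $\pi_1$ is a free product). The conclusion is unchanged since $Q$ is irreducible with finite non-cyclic $\pi_1$, but the justification needs to be phrased differently for $a_i=0$ than for $|a_i|=1$. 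Finally, you are right to flag the correspondence between the exponents $a_i$ and the Seifert invariants as the point requiring the most care; note that the paper also asserts this correspondence without proof, so neither argument is fully self-contained there.
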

\begin{proof}
Suppose the monodromy of the open book consists of $a,b,c$ Dehn twists about the three boundary components correspondingly, for $a,b,c\in\Z$. Then $Q$ is a Seifert fibered space over $S^2$ with three exceptional fibers of orders $a$, $b$, and $c$. Then $\pi_1(Q)=\langle x_1,x_2,x_3,h\mid [x_i,h]=x_1^ah=x_2^bh=x_3^ch=x_1x_2x_3=1\rangle$. But recall also that $\pi_1(Q)$ is the quaternion group. Then $h\in Z(\pi_1(Q))$ implies $h=\pm 1$.

We have $\Z/2\oplus\Z/2=H_1(Q)$ is the abelianization of $\pi_1(Q)$. So if $h=1$, $\Z/2\oplus\Z/2\cong\langle x_1,x_2\mid ax_1=bx_2=c(x_1+x_2)=0\rangle$. This implies $a$ and $b$ are even, but neither $x_1$ nor $x_2$ can be $\pm 1$ (or else $H_1(Q)$ would be cyclic). Therefore, $4\mid(a,b,c)$, giving abelianization $\Z/4\oplus\Z/4$, a contradiction. Thus, $h=-1$.

Now $h=-1$, and $\Z/2\oplus\Z/2=\pi_1(Q)/\langle -1\rangle=\langle x_1,x_2,x_3\mid x_1^a=x_2^b=x_3^c=x_1x_2x_3=1\rangle$. By multiplying the $x_i$ by $-1$ and/or replacing $x_i$ with $x_i^{-1}$, we see this group is isomorphic to the triangle group $\langle x,y,z\mid x^{|a|}=y^{|b|}=z^{|c|}=xyz=1\rangle$. Since $\pi_1(Q)/\langle -1\rangle=\Z/2\oplus\Z/2$ is finite and dihedral, this is a spherical triangle group with $|a|=|b|=|c|=2$. Computing $4=|H_1(Q)|=|ab+bc+ca|$ yields $\{a,b,c\}=\pm\{2,2,-2\}$.
\end{proof}

\begin{corollary}\label{cor:rightsign}
Let $T$ be a $(g,k,0,3)$-trisection of $X^4\setminus \nu(S)$ produced by the algorithm of Section \ref{sec:rp2subsection}. If $e(S)=2$, then the monodromy on the open book induced by $T$ has left-handed twists about two bindings and right-handed twists about the other (mirrored for $e(S)=-2$).
\end{corollary}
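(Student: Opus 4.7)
The plan is to combine the preceding lemma with the explicit monodromy computations of $T_1$ and $T_2$ from Section \ref{sec:preferredtrisections}, using the gluing compatibility condition for relative trisections.

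First, I would invoke the preceding lemma, which constrains the monodromy of any open book on $Q$ with $3$-punctured sphere pages to consist of two Dehn twists about each binding, with signs not all the same. Applied to the open book induced on $\partial(X^4 \setminus \nu(S)) \cong Q$ by the trisection $T$, this leaves only two possibilities up to labeling of the bindings: two right-handed and one left-handed Dehn twist, or two left-handed and one right-handed.

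Next, I would pin down the sign pattern by comparing with the preferred trisections of $\nu(S) = P_+$. The trisection $T$ produced by the algorithm of Section \ref{sec:rp2subsection} is, by design, set up to glue with a trisection of $\nu(S)$ to form a trisection of $X^4$; in particular, the induced open books on $Q$ from $T$ and from one of $T_1$, $T_2$ must be isotopic with opposite orientations, per Definition \ref{relativedefinition}. The monodromies of the open books induced by $T_1$ and $T_2$ on $\partial P_+$ each consist of two right-handed and one left-handed Dehn twist (Figures \ref{fig:monodromy1} and \ref{fig:monodromy2}). Since reversing the orientation of a surface negates the sign of each Dehn twist, the monodromy induced by $T$ must consist of two left-handed and one right-handed Dehn twist. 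The mirrored statement for $e(S) = -2$ follows symmetrically using $\overline{T_1}$ and $\overline{T_2}$ on $P_-$.

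The main obstacle is the rigorous verification of the gluing compatibility, i.e., that the open book on $\partial(X^4 \setminus \nu(S))$ produced by the algorithm is isotopic, with opposite orientation, to the open book on $\partial P_+$ from $T_1$ or $T_2$. One approach is a direct computation of the monodromy from the explicit relative trisection diagram from Section \ref{sec:rp2subsection} via the algorithm of \cite{monodropy{monodromypaper}} — tracking how the boundary-stabilization arcs $C_1, C_2, C_3$ affect the pages and monodromy. A cleaner abstract argument would note that the preceding lemma shows all such open books on $Q$ are isomorphic up to choice of sign and permutation of bindings, so once the sign is pinned down by the orientation of $X^4 \setminus \nu(S)$ (which is opposite to that of $\nu(S) = P_+$), the compatibility is automatic and the sign pattern of $T$'s monodromy is forced.
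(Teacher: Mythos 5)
Your approach is essentially the one the paper uses: the preceding lemma restricts the monodromy twist counts to $\pm\{2,2,-2\}$, and an orientation comparison between $Q$ as $\partial(X^4\setminus\nu(S))$ versus $Q$ as $\partial P_+$ forces $T$'s sign pattern to be opposite to that of $T_1,T_2$. The paper simply phrases this as a short proof by contradiction. One caution about the logic of your middle paragraph: you cannot at this stage invoke that $T$'s open book must match those of $T_1,T_2$ with opposite orientation ``per Definition~\ref{relativedefinition}'' --- the gluability of the \emph{trisections} is exactly what Corollary~\ref{cor:rightsign} (together with Corollary~\ref{cor:lastcor}) is in the business of establishing, so using it here is circular. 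The fact you actually need is far more elementary and does not involve trisections at all: since $X^4 = (X^4\setminus\nu(S))\cup_Q P_+$, the two sides induce opposite orientations on $Q$. Combined with the lemma's rigidity statement (up to homeomorphism there is essentially a single such open book for each sign pattern, and the two sign patterns correspond to opposite orientations of $Q$), if $T$ had the same sign pattern as $T_1,T_2$ it would induce the same orientation on $Q$, a contradiction. Your final paragraph's ``cleaner abstract argument'' is exactly this and is the correct version; your second paragraph should be revised to avoid appealing to the gluing criterion.
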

\begin{proof}
Say $e(S)=2$. Fix $Q=\boundary P_+$ with singular fibers $S_0,S_1,S_{-1}$. We saw in Section \ref{sec:preferredtrisections} that $T_1,T_2$ each have monodromy consisting of right-handed twists about two boundaries and left-handed twists about the other. Suppose the same is true for $T$. If the left-handed boundary corresponds to $S_{-1}$, then we see $T_1$ and $T$ induce the same orientation on $Q$. Similarly, if the left-handed boundary corresponds to $S_0$ or $S_{1}$, then we see $T$ induces the same orientation on $Q$ as $T_2$. In either case, we find $X^4\setminus P_+$ and $P_+$ induce the same orientation on $Q$; a contradiction.
\end{proof}

\begin{corollary}\label{cor:lastcor}
Let $T$ be a $(g,k,0,3)$-trisection of $X^4\setminus \nu(S)$. Say $e(S)=2$. We may glue $T$ to $T_1$ or $T_2$ to obtain $(g+4,k)$-trisections of $X^4,\tau_S(X^4)$, and $\Sigma_S(X^4)$. (If $e(S)=-2$, then the result holds for gluing $T$ to $\overline{T_1}$ or $\overline{T_2}$.)
\end{corollary}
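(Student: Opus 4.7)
The plan is to apply the Castro gluing theorem for relative trisections stated at the end of Definition \ref{relativedefinition}: two relative trisections of $4$-manifolds with common boundary glue to a closed trisection provided the induced open books on the boundary agree up to orientation reversal. The main work is verifying this matching condition and then identifying which Price-twisted $4$-manifold each permitted gluing produces.

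First, I would combine Corollary \ref{cor:rightsign} with the monodromy descriptions in Section \ref{sec:preferredtrisections}. By Corollary \ref{cor:rightsign}, the open book on $Q = \boundary(X^4 \setminus \nu(S))$ induced by $T$ has monodromy consisting of two left-handed Dehn twists about each of two bindings and two right-handed twists about the third. Since a gluing homeomorphism reverses the orientation of $Q$, I invert this monodromy to get two right-handed twists about two bindings and two left-handed twists about the third. This combinatorial pattern matches $T_1$'s monodromy (left-handed on $S_{-1}$, right-handed on $S_0, S_1$) and $T_2$'s monodromy (left-handed on $S_1$, right-handed on $S_0, S_{-1}$) up to a permutation of the three bindings.

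Next, by Price's classification of self-homeomorphisms of $Q$ (recalled in Section \ref{sec:pricetwist}), every such homeomorphism preserves the Seifert structure and is determined by its induced permutation of $\{S_{-1}, S_0, S_1\}$, with all six permutations realized. Hence any bijection from $T$'s bindings to the bindings of $T_1$ or $T_2$ that sends $T$'s unique right-handed binding to the unique left-handed binding on the other side is induced by an orientation-reversing self-homeomorphism of $Q$, so the open books match and Definition \ref{relativedefinition} produces a closed trisection. The resulting $4$-manifold is detected by where the induced gluing map $\phi$ sends $S_{-1}$, exactly as in Section \ref{sec:pricetwist}. Gluing to $T_1$ forces $T$'s right-handed binding to be identified with $S_{-1}$, and the remaining ambiguity in how to match the two like-signed bindings collapses under the $S_0 \leftrightarrow S_1$ homeomorphism of $P_+$ recorded by Price. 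Gluing to $T_2$ instead identifies $T$'s right-handed binding with $S_1$, and again the leftover ambiguity collapses. Running through the three possibilities for how $T$'s right-handed binding is labeled in $\boundary(X^4\setminus\nu(S))$ under the Section \ref{sec:pricetwist} convention, and choosing $T_1$ or $T_2$ to accommodate it, exhausts the three Price twists $X^4,\tau_S(X^4),\Sigma_S(X^4)$.

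Finally, the parameters of the resulting closed trisection follow from the standard formula for gluing relative trisections along a matched open book: two $(g_i,k_i,p,b)$-relative trisections glue to a closed trisection with $g=g_1+g_2+b-1$ and $k=k_1+k_2-2p-b+1$. Substituting $(g,k,0,3)$ for $T$ and $(2,2,0,3)$ for $T_i$ yields $(g+4,k)$. The main obstacle is the bookkeeping of the previous paragraph: cleanly arguing that as we range over the two choices $T_1,T_2$ and over the binding permutations consistent with the sign structure, we realize exactly the three Price twists rather than repeating one or missing one. This relies critically on Price's result that the transposition $S_0\leftrightarrow S_1$ extends over $P_+$, collapsing the apparent two-fold ambiguity within each choice of $T_i$ to a single $4$-manifold, while the distinction between $T_1$ and $T_2$ (and the three labelings of $T$'s right-handed binding) supplies exactly the three distinct outputs.
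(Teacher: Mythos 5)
Your overall approach — invoke the Castro gluing criterion, match monodromy signs via Corollary \ref{cor:rightsign}, use Price's classification of self-homeomorphisms of $Q$, and then apply the parameter formula — is the same as the paper's, and the parameter computation at the end is correct. The gap is in the counting argument in the middle.

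You claim that for the $T_2$ gluing, ``again the leftover ambiguity collapses.'' It does not. In $T_2$ the two like-signed (right-handed) bindings are $S_0$ and $S_{-1}$, so the two ways of matching them to the two left-handed bindings of $T$ differ by the transposition $S_0 \leftrightarrow S_{-1}$ of $\boundary\nu(S)$. Price showed only that $S_0 \leftrightarrow S_1$ extends over $P_{\pm}$; the transposition $S_0 \leftrightarrow S_{-1}$ moves $S_{-1}$ and does not extend (if it did, every Price regluing would give back $X^4$). So the two choices for gluing $T$ to $T_2$ are genuinely different: one sends $S_{-1}\subset\boundary P_+$ to one left-handed binding of $T$, the other sends it to the other. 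That residual two-fold choice is precisely what produces the second and third Price twists. The correct tally, which is what the paper actually argues, is: gluing to $T_1$ sends $S_{-1}$ to $T$'s unique right-handed binding (one outcome, after collapsing via $S_0\leftrightarrow S_1$), while gluing to $T_2$ sends $S_{-1}$ to either of $T$'s two left-handed bindings (two distinct outcomes). Together these three gluings realize $\phi(S_{-1})$ as each of the three singular fibers of $\boundary(X^4\setminus\nu(S))$, hence $X^4$, $\tau_S(X^4)$, and $\Sigma_S(X^4)$.

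Your closing sentence tries to supply the missing third manifold by ``running through the three possibilities for how $T$'s right-handed binding is labeled,'' but that labeling is determined by the fixed trisection $T$ of $X^4\setminus\nu(S)$; it is not a parameter you get to vary. The three outputs must come from the three distinct gluings of the single $T$ described above, not from hypothetically relabeling $T$.
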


\begin{proof}
By Corollary \ref{cor:rightsign}, $\mirror{T},T_1,T_2$ induce homeomorphic open books on $Q$. By gluing $T$ and $T_1$, we may identify $S_{-1}\subset Q=\boundary P_+$ with the right-hand twist boundary of $T$. By gluing $T$ and $T_2$, we may identify $S_{-1}$ with either of the left-hand twist boundaries of $T$. Thus, we produce trisections of three $4$-manifolds $\nu(S)\cup_{\phi} (X^4\setminus \nu(S))$, where $\phi:Q\to Q$ preserves the Seifert fiber structure and can be chosen to map $S_{-1}$ to any singular fiber. By the discussion in Section \ref{sec:pricetwist}, these manifolds are $X^4,\tau_S(X^4),\Sigma_S(X^4)$.

To see that the resulting trisection is a $(g+4,k)$-trisection, recall that $T_i$ is a $(g',k',p,b)=(2,2,0,3)$-relative trisection. Then the result of gluing $T$ to $T_i$ is a $(g+g'+(b-1),k+k'-(2p+b-1))=(g+4,k)$-trisection.
\end{proof}

\subsection{Example}\label{exampletwist}
Finally, we present an example of trisecting the result of $\RP^2$ surgery.
Let $S\cong\RP^2\subset S^4$ be the connect-sum of the spun trefoil and an unknotted $\RP^2$ with Euler number $-2$.
We first isotope $S$ to be in $(c,b)=(1,2)$-bridge position with respect to a $(3,1)$-trisection of $S^4$. We depict a shadow diagram for $S$ in Figure \ref{fig:rp2example1} (top). This diagram can be obtained by understanding  \cite{surfacetrisections} (either the explicit example diagrams of twist-spun knots or the procedure to turn a movie of a knotted surface into a triplane diagram). We obtain a relative trisection $T=(\Sigma',\alpha',\beta',\gamma')$ of $S^4\setminus\nu(S)$ as in Section \ref{sec:rp2subsection}. A diagram of $T$ is pictured in Figure \ref{fig:rp2example1} (bottom), obtained as per the algorithm of Section \ref{sec:rp2subsection}. We specifically choose the arcs $C_1, C_2, C_3$ so that $\Sigma'$ has three boundary components.

To achieve surgery on $S$ diagramatically, we will glue $T$ to $\overline{T_1}$ or $\overline{T_2}$ (our two preferred trisections of $P_-$ from Section \ref{sec:prefer}) as in Corollary \ref{cor:lastcor}. By Price \cite{pricepaper} (see Section \ref{sec:price}), the gluing of $S^4\setminus\nu(S)$ and $P_-$ is determined up to diffeomorphism by the identifications of $\boundary \Sigma'$ with the boundary of the trisection surface of $\overline{T_i}$. In particular, the gluing is determined up to diffeomorphism by the choice of which boundary of $\Sigma'$ is identified with the $S_{-1}$ boundary component of $\overline{T_i}$. When gluing $T$ to $\overline{T_1}$, the resulting manifold is therefore determined up to diffeomorphism. There are potentially two nondiffeomorphic choices resulting from gluings of $T$ to $\overline{T_2}$.

To glue $T$ and $\overline{T_i}$, we must know the monodromy $T$ induces on $\boundary(S^4\setminus\nu(S))$. We apply the monodromy algorithm of \cite{monodromypaper}. One need not perform the entire algorithm -- the effect of the monodromy on one arc between the two leftmost boundary components of $\Sigma'$ is to add two right-handed twists around the leftmost boundary components and two right-handed twists about the middle boundary component. By Corollary \ref{cor:rightsign}, the twists about the third boundary are right-handed.

The boundary of $\Sigma'$ which does not meet any of the bands coming from boundary-stabilization (rightmost in Figure \ref{fig:rp2example2}) corresponds to a meridian of $S$. Gluing the $S_{-1}$ boundary of $\overline{T_2}$ to this boundary yields a trisection diagram of $S^4$.

Another (leftmost in Figure \ref{fig:rp2example2}) boundary of $\Sigma'$ corresponds to a curve in $\boundary(S^4\setminus \nu(S))$ which bounds a disk in $S^4\setminus\nu(S)$. Gluing the $S_{-1}$ singular fiber of $\boundary P_-$ to the corresponding singular fiber of $\boundary(S^4\setminus\nu(S))$ yields a manifold with nontrivial $H_1$; this is $\tau_S(S^4)$. Then gluing the $S_{-1}$ boundary of $\overline{T_1}$ to this boundary of $\Sigma'$ yields a trisection diagram of $\tau_S(S^4)$. 

Gluing the $S_{-1}$ boundary of $\overline{T_2}$ to the final boundary of $\Sigma'$ (middle in Figure \ref{fig:rp2example2}) yields a trisection diagram of $\Sigma_S(S^4)$.

We depict all the described gluings schematically in Figure \ref{fig:rp2example2}. This relative trisection diagram of $T$ agrees with the diagram of Figure \ref{fig:rp2example1} up to a surface automorphism and one handle slide of the $\gamma'$ curves.

\begin{figure}
\includegraphics[width=\textwidth]{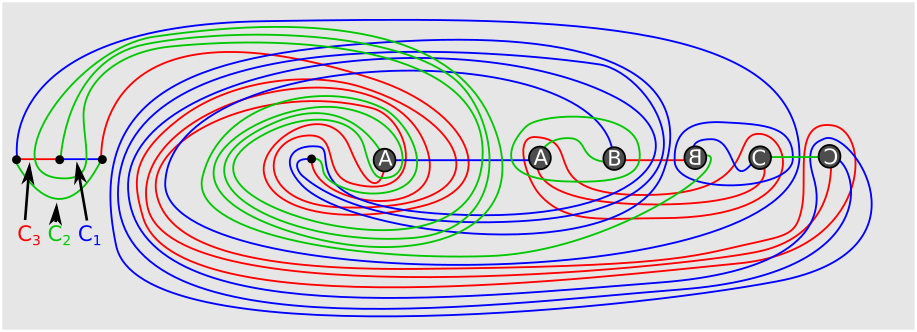}\\
\vspace{.1in}
\includegraphics[width=\textwidth]{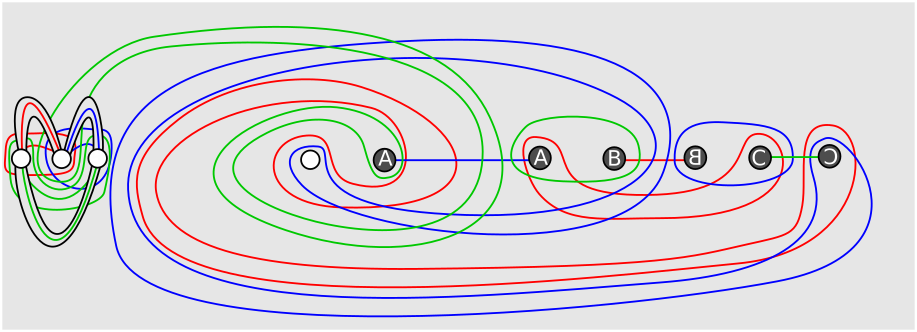}
\caption{Top: A shadow diagram for $S$. Here, $S$ is the connect sum of the spun trefoil with an unknotted $\RP^2$ (euler number $-2$) in $S^4$. We indicate shadows of arcs $C_1, C_2, C_3$ to be used in finding a relative trisection $T$ of $S^4\setminus\nu(S)$. Bottom: A relative trisection diagram for $T$. Here, $T$ has $(g,k,p,b)=(5,3,0,3)$.}
\label{fig:rp2example1}
\end{figure}

\begin{figure}
\includegraphics[width=\textwidth]{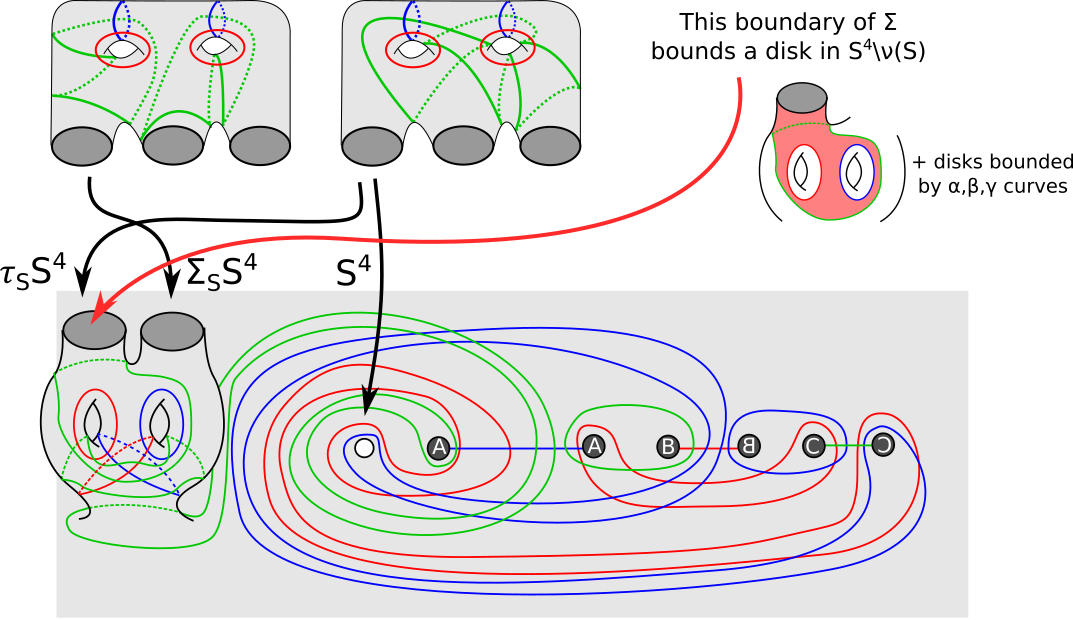}
\caption{We obtain trisections of $S^4, \tau(S^4),\Sigma(S^4)$ by gluing $\overline{T_1}$ or $\overline{T_2}$ to $T$. The choice of which boundary of $\Sigma'$ to identify with the $S_{-1}$ boundary of the trisection surface of $\overline{T_1}$ or $\overline{T_2}$ determines the diffeomorphism type of the resulting trisected $4$-manifold.}
\label{fig:rp2example2}
\end{figure}

\section{Further Questions}
In Example \ref{exampletwist}, we know $\Sigma_S(S^4)\cong S^4$ because $S=K\# P_-$, where $K$ is the spun trefoil. By \cite{katanaga}, $\Sigma_S(S^4)\cong \Sigma_K(S^4)\cong S^4$.

\begin{question}
Using Gay-Meier's trisections of Gluck twists \cite{jeffgluck} and our trisections of Price twists, is there a trisection-theoretic proof of  Katanaga, Saeki, Teragaito, Yamada's result \cite{katanaga}? That is, is there a trisection-theoretic proof that $\Sigma_K(X^4)\cong\Sigma_{K\# P_{\pm}}(X^4)$ for a $2$-knot $K$ with trivial Euler number? Possibly restricting to the case $X^4=S^4$?
\end{question}

When $\Sigma_S(S^4)\cong S^4$ we can ask about the complexity of the resulting trisection.

\begin{question}\label{finalquestion}
Let $S$ be a copy of $\RP^2$ embedded in $S^4$ so that $\Sigma_S(S^4)\cong S^4$. Let $T$ be a trisection of $\Sigma_S(S^4)\cong S^4$ arising from the algorithm of Section \ref{sec:gettrisection}. Is $T$ a stabilized copy of the standard $(0,0)$-trisection on $S^4$?
\end{question}

This question is a specific case of a question/conjecture of \cite{propr}.

\newtheorem*{jeffalexquestion}{Conjecture 3.11 of \cite{propr}}

\begin{jeffalexquestion}
Every trisection of $S^4$ is either the $(0,0)$-trisection or a stabilization of the $(0,0)$-trisection.
\end{jeffalexquestion}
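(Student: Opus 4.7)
This statement is a well-known open conjecture, the four-dimensional analog of Waldhausen's classification of Heegaard splittings of $S^3$. The plan is an induction on the trisection genus $g$: for $g=0$ the conclusion is immediate, since a $(0,0)$-trisection consists of three $4$-balls glued along the standard spine and is unique up to diffeomorphism. For $g>0$, the goal is to exhibit a ``reducing structure'' witnessing the given trisection $(X_1,X_2,X_3)$ as a stabilization of one of strictly smaller genus, and then apply the inductive hypothesis.

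Concretely, my first step would be to locate an essential simple closed curve $c\subset\Sigma_g$ that bounds compressing disks simultaneously in all three handlebodies $H_\alpha$, $H_\beta$, $H_\gamma$. The union of three such disks is an embedded $2$-sphere in $S^4$ meeting $\Sigma_g$ transversely in the single curve $c$; cutting along it decomposes the trisection as a connect sum of two trisections of $S^4$, at least one of which must be a genus-$1$ standard trisection (and hence can be absorbed as a stabilization of the other). To produce such a $c$, I would exploit that each pairwise diagram $(\Sigma_g,\alpha,\beta)$, $(\Sigma_g,\beta,\gamma)$, $(\Sigma_g,\alpha,\gamma)$ is a Heegaard diagram for $\#_{k_i}(S^1\times S^2)$, which by Waldhausen's theorem is standard. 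This standardness provides plentiful compressing disks in each pairwise handlebody, and the simple-connectivity of $S^4$ -- together with Laudenbach--Poenaru, to pass from disks bounding curves in $\Sigma_g$ to disks coordinated across the spine -- should be the key tool in forcing a mutually compatible choice.

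The principal obstacle -- and the reason the conjecture remains open in general -- is the absence of a four-dimensional analog of the Rubinstein--Scharlemann graphic and the sweep-out arguments used to locate reducing spheres in Heegaard splittings. In three dimensions the handlebodies are rigid enough that incompressible surface techniques and combinatorial disk patterns on $\Sigma_g$ suffice; in four dimensions the pieces $X_i\cong\natural_k S^1\times B^3$ are themselves $4$-manifolds with a great deal of flexibility, so an embedded reducing $2$-sphere in $S^4$ need not be isotopic to one meeting $\Sigma_g$ in a single curve, and compatible pairwise disks need not assemble into an embedded $2$-sphere. Overcoming this hurdle likely requires either genuinely new machinery for four-dimensional sweep-outs, or a clever use of features specific to $S^4$ such as the uniqueness of its Heegaard splittings or canonical embeddings of trisection spines.

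A more modest intermediate target, consistent with Question \ref{finalquestion}, is to verify the conjecture directly for the trisections of $S^4$ produced by the algorithm of Section \ref{sec:gettrisection}. Since in that setting the trisection comes with a preferred connect-sum-like decomposition along $Q=\boundary P_\pm$, one might hope to find destabilizations by hand on the gluing of $T$ to $\overline{T_i}$, or by comparison of the resulting Kirby diagram (constructed via Section \ref{sec:getKirby}) with a standard handle presentation of $S^4$. A positive answer in that restricted setting would already be a substantial contribution towards the full conjecture.
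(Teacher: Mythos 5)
This statement is Conjecture~3.11 of Meier--Schirmer--Zupan, which the paper quotes as an \emph{open conjecture} for context surrounding Question~\ref{finalquestion}; the paper offers no proof of it, so there is nothing in the paper to compare your argument against. You correctly identify the statement as open, and your discussion of the expected strategy (find a reducing curve $c$ bounding disks in all three handlebodies, decompose, induct) and of the central obstruction (no $4$-dimensional analogue of Rubinstein--Scharlemann sweep-outs, so compatible pairwise compressions need not assemble) matches the community's general understanding of why this is hard.

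A few cautions on the strategy sketch itself, in case you pursue it. First, even granting a reducing curve $c$, the induced decomposition is a connected sum of two trisected $4$-manifolds of total genus $g$; there is no reason either summand should have genus exactly $1$, so the inductive step as written does not close. One must instead argue that each summand is again a trisection of $S^4$ -- and even that step quietly invokes the smooth $4$-dimensional Schoenflies problem (a reducing $2$-sphere in $S^4$ must bound a ball on each side), which is itself open in general, though handled for low trisection genus by Meier--Zupan's classification. Second, the uniqueness of Heegaard splittings of $\#_k(S^1\times S^2)$ is more than Waldhausen's $S^3$ theorem; it rests on additional results for reducible $3$-manifolds (Haken's lemma, etc.), so the citation should be sharpened. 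Third, Laudenbach--Po\'enaru tells you that the spine determines the $4$-manifold; it does not by itself coordinate compressing disks across the three handlebodies, which is exactly the missing ingredient. Your ``modest intermediate target'' -- verifying the conjecture for the specific trisections of $S^4$ produced by Section~\ref{sec:gettrisection}, perhaps via the Kirby diagram machinery of Section~\ref{sec:getKirby} -- is, as you note, precisely Question~\ref{finalquestion} of the paper and would indeed be a worthwhile contribution.
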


In Figure \ref{fig:nonstandard}, we picture the simplest example of the trisections described in Question \ref{finalquestion}. This is a $(g,k)=(6,2)$-trisection of $S^4$ obtained by Price twisting $P_-\subset\RP^2$.
\begin{figure}
\includegraphics[width=\textwidth]{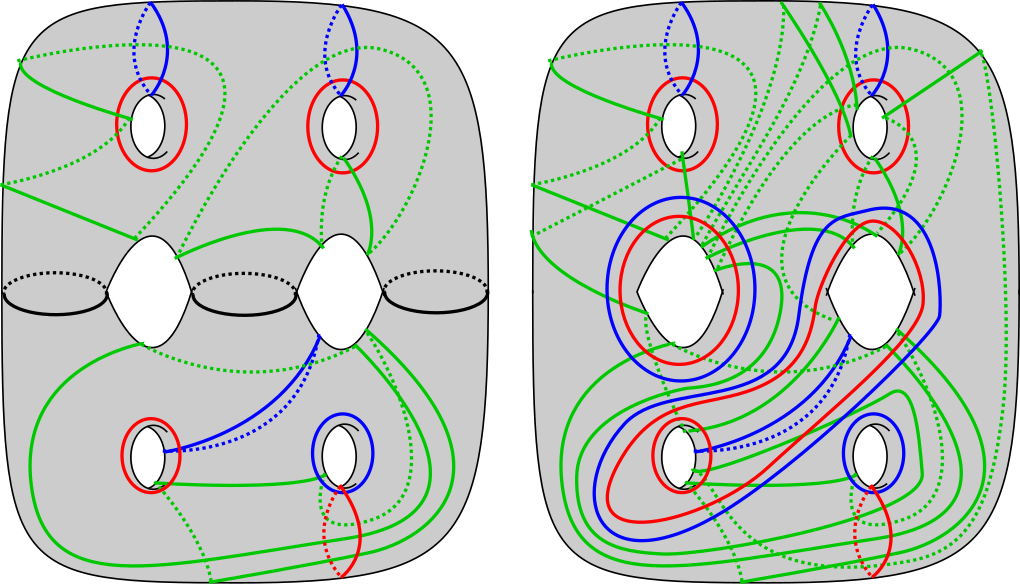}
\caption{Left: We begin to obtain a trisection $T$ of $S^4\cong\Sigma_{P_-}(S^4)$ by gluing $\overline{T_1}$ (top) to a relative trisection of $S^4\setminus\nu(P_-)$ (bottom), obtained as in Example \ref{exampletwist}. This is not yet a trisection diagram, as the surface is genus-$6$ but there are only $4$ (each) $\alpha,\beta$, and $\gamma$ curves. Right: We use the algorithm of \cite{monodromypaper} to find the two remaining $\alpha,\beta,\gamma$ curves (each) in $T$. Question \ref{finalquestion} asks: is $T$ a stabilization of the $(0,0)$-trisection of $S^4$?}
\label{fig:nonstandard}
\end{figure}

\bibliographystyle{abbrv}
%%references

\end{document}